\newtheorem{example}{Example}[section]
\numberwithin{figure}{section}
\numberwithin{table}{section}
 \renewcommand{\d}{\mathrm{d}}
\newcommand{\red}[1]{{\color{red} #1}}
\renewcommand\sout[1]{\bgroup\markoverwith{\textcolor{red}{\rule[0.5ex]{2pt}{0.6pt}}}\ULon{#1}}
\def\d{{\mathrm d}}
\def\G{{\mathcal G}}
\def\F{{\mathcal F}}
\def\A{{\mathcal{A}}}
\def\H{{\mathcal{H}}}
\def \L{\mathcal{L}}
\def \t{\theta}
\def\bt{\boldsymbol{\t}}
\def\<{{\langle }}
\def\>{{\rangle }}
\definecolor{darkred}{RGB}{200,0,0}
\definecolor{darkblue}{RGB}{0,0,200}
\definecolor{darkgreen}{RGB}{0,160,0}
\title{Optimizing Coarse Propagators in Parareal Algorithms\thanks{The work of B. Jin is supported by UK EPSRC EP/V026259/1, Hong Kong RGC General Research Fund (14306423), and a start-up fund from The Chinese University of Hong Kong. The work of  Z. Zhou is supported by Hong Kong RGC General Research Fund (15303021) and an internal grant of Hong Kong Polytechnic University (Project ID: P0038888, Work Programme: ZVX3).}}
\author{Bangti Jin\thanks{Department of Mathematics, The Chinese University of Hong Kong, Shatin, New Territories, Hong Kong, P.R. China (\texttt{bangti.jin@gmail.com, b.jin@cuhk.edu.hk}).} \and Qingle Lin\thanks{School of Mathematics, Jilin University, Changchun 130012, P.R. China (\texttt{linql1020@mails.jlu.edu.cn})}
\and Zhi Zhou\thanks{Department of Applied Mathematics, The Hong Kong Polytechnic University, Kowloon, Hong Kong, P.R. China. (\texttt{zhizhou@polyu.edu.hk})}}
\date{\today}
\begin{document}
	\maketitle

\begin{abstract}
The parareal algorithm represents an important class of parallel-in-time algorithms for solving evolution equations and has been widely applied in practice. To achieve effective speedup, the choice of the coarse propagator in the algorithm is vital. In this work, we investigate the use of {optimized} coarse propagators. Building upon the error estimation framework, we present a systematic procedure for constructing coarse propagators that enjoy desirable stability and consistent order. Additionally, we provide preliminary mathematical guarantees for the resulting parareal algorithm. Numerical experiments on a variety of settings, e.g., linear diffusion model, Allen-Cahn model, and viscous Burgers model, show that the optimizing procedure can significantly improve parallel efficiency when compared with the more ad hoc choice of some conventional and widely used coarse propagators.
\end{abstract}

\begin{keywords}
parareal algorithm, parabolic problems, single step integrator, convergence factor
\end{keywords}

\begin{AMS}
65M55
\end{AMS}

\pagestyle{myheadings}
\thispagestyle{plain}

\section{Introduction}\label{sec:intro}
This paper is devoted to accelerating a class of parareal solvers for parabolic problems. Let $T >0$ be a fixed terminal time.  Consider the following initial value problem for $u \in C((0,T];D(\A))\cap C([0,T];\H)$:
\begin{equation}\label{eqn:pde}
		\left \{
		\begin{aligned}
			&u' (t) + \A u(t) = f(t), \quad 0<t<T,\\
			& u(0)=u^0 ,
		\end{aligned}\right .
\end{equation}
where $u^0\in \H$ is the initial data,  $f : [0,T] \to \H$ is a given forcing term, and $\A$ is a positive definite, self-adjoint, linear operator with a compact inverse, defined over a
Hilbert space $(\H, (\cdot , \cdot ))$ (with the induced norm $\|\cdot\|$) with its domain $D(\A)$ dense in $\H$. This model arises in a very broad range of practical applications, and its effective numerical simulation is of enormous importance and has received a lot of attention \cite{thomee2007galerkin}.

The numerical solution of the model \eqref{eqn:pde} commonly employs time discretization, and various time-stepping schemes have been proposed; see the monograph \cite{thomee2007galerkin} for an in-depth treatment. Typically these schemes proceed step by step, and the process is sequential in nature. This represents the major bottleneck in the numerical solution of the model \eqref{eqn:pde}, especially for long time. In the ground-breaking work \cite{LionsMadayTurinici:2001}, Lions, Maday and Turinici proposed the so-called parareal algorithm for solving evolution models in a parallel-in-time manner. The method has been established as a fundamental tool to utilize highly parallel systems when distributed computing in space saturates. So far this class of methods have been applied successfully to numerous diverse practical applications, e.g., option pricing \cite{BalMaday:2002,PagesPironneau:2016}, multiscale kinetics \cite{Engblom:2009,ArielKim:2016},
stochastic PDEs \cite{BrehierWang:2020} and fractional diffusion \cite{XuHesthaven:2015,LiHu:2021}.

The parareal algorithm is based on two solvers, one coarse propagator (CP), of low accuracy but fast enough to be applied sequentially, and one fine propagator (FP) with full accuracy, applied in parallel in time. At each iteration, the CP is run sequentially over each subdomain,
and then the FP is run in parallel as a correction. See Section \ref{ssec:parareal} for the details on how the CP and FP interact. The CP is usually based on one or a few steps of a standard numerical solver, and in practice, it is often a numerical integrator using lower resolution, e.g., much larger time step size, in order to effectively propagate the information forward in time. Ideally, the parareal algorithm can achieve the accuracy of the FP at the efficiency of the CP. The choice of the CP is crucial to achieve the desired speedup on modern  multicore / many-core computer architectures. Nonetheless, the optimal choice of CP remains highly nontrivial. Even worse, for the model \eqref{eqn:pde} with nonsmooth data, the convergence of the parareal algorithm with the standard backward Euler CP is somehow limited \cite{Wu:IMA2015,Mathew:2010,FriedhoffSouthworth:2021,yang2021robust}, which essentially restricts the achievable efficiency, cf. \eqref{eqn:conv-1} below for the precise statement.  This issue occurs with other commonly used CPs \cite[Tables 1 and 2]{FriedhoffSouthworth:2021}. These observations naturally lead to the important question whether it is possible to overcome the convergence barrier of the parareal algorithm by  properly modifying the CP.

In this work we propose an innovative and easy-to-implement framework for designing CPs in parareal algorithms for problem \eqref{eqn:pde}. It builds on the error estimation framework \cite{GanderVandewalle:2007,Southworth:2019,FriedhoffSouthworth:2021,yang2021robust}, and follows the prevalent paradigm of using {optimization} to address challenging tasks in scientific computing. Specifically, based on an analytic convergence factor derived in the error estimation framework, we construct an {optimized coarse propagator (OCP)} to minimize an upper bound on the convergence factor. In this way, the {OCP} is specifically optimized for the underlying FP, which differs markedly from the conventional choice where the CP is almost agnostic to the choice of the FP. We provide a systematic procedure for constructing CPs that respect highly desirable properties, e.g., stability and accuracy. We present a preliminary analysis of the framework, which shows that {OCPs} can indeed achieve the desired goal for linear evolution problems. Several numerical experiments on linear and nonlinear problems, including linear diffusion model, Allen-Cahn model and viscous Burgers equation, are presented to illustrate the approach. The numerical results show that employing optimization techniques consistently enhances the performance compared to the conventional choice of using the backward Euler scheme (or other popular single step methods) as the CP.

Broadly, this work follows the prevalent paradigm of learning/ optimizing numerical algorithms for scientific computing, which combines mathematically driven, handcrafted design of general algorithm structure with a data-driven adaptation to specific classes of tasks \cite{Bar-Sina:2019,GreenfeldKimmel:2019,Mishra:2019,GuoLi:2022,HuangLiXi:2023,IbrahimRuprecht:2023}. For example, Guo et al \cite{GuoLi:2022} present a machine learning approach that automatically learns effective solvers for initial value problems in the form of ordinary differential equations, based on the Runge-Kutta integrator, and learn high-order integrators for targeted families of differential equations. The use of {OCPs} in designing new parareal algorithms has also been explored \cite{YallaEngquist:2018,AgbohDogar:2020,NguyenTsai:2023,IbrahimRuprecht:2023}. Yalla and Engquist \cite{YallaEngquist:2018} explored the use of neural networks to approximate the phase map as a CP for high-dimensional harmonic oscillators and localized multiscale problems. Agboh et al \cite{AgbohDogar:2020} used a feed-forward deep neural network as a CP to integrate an ODE, and observed improved performance. Nguyen and Tsai \cite{NguyenTsai:2023} used supervised learning to enhance its efficiency for wave propagation. Ibrahim et al \cite{IbrahimRuprecht:2023} proposed to use a physics-informed neural network (PINN) as a CP, and show the speedup on the Black-Scholes equation which performs better than a numerical CP. {Very recently, Yoda et al \cite{yoda2024coarse} optimized the coarse-grid operator in multigrid reduction in time (MGRIT) \cite{friedhoff2012multigrid} in order to improve its convergence for solving time-dependent Stokes and Oseen problems.}
The present work continues along this active line of research on enhancing parareal algorithms with learning / {optimizing} concept \cite{YallaEngquist:2018,AgbohDogar:2020,NguyenTsai:2023,IbrahimRuprecht:2023}, but with the major difference of explicitly building analytic insights into the optimization procedure, i.e., stability and consistency, and consequently the {optimized} CPs proposed in this work enjoy rigorous mathematical guarantees. {Moreover, the proposed optimization framework does not use training data, which differs markedly from the existing machine learning techniques based on neural networks.}

The rest of the paper is organized as follows. In Section \ref{sec:prelim}, we recall preliminary materials on single step time stepping methods and parareal algorithm. In Section \ref{sec:TPC} we develop learned coarse propagators, and analyze their properties. Then in Section \ref{sec:experiment} we illustrate the performance of trained coarse propagators on several model problems.
	
\section{Single step methods and parareal algorithm}\label{sec:prelim}
First we describe the abstract framework of single step time stepping methods for solving problem \eqref{eqn:pde}, and give a brief
 overview of the parareal algorithm. For an in-depth treatment of these topics, we refer interested readers to the monograph
 \cite{thomee2007galerkin} and the review \cite{gander201550}.
		
\subsection{Single step solvers for parabolic equations}\label{ssec:single step}
To discretize problem \eqref{eqn:pde} in time, we divide the time interval $(0,T)$ into $N$ equidistant
subintervals, each of length ${\Delta t} = T/N$, and let the grid points be $t_n = n{\Delta t}$, for $n=0,1,\ldots,N$. Then a single step scheme approximates the solution $u(t_{n+1})$ by
\begin{equation}\label{eqn:semi}
	u^{n+1} = r({\Delta t} \A) u^{n} + {\Delta t} \sum_{i=1}^m p_i({\Delta t} \A)
	f(t_n+c_i\Delta t) ,\quad \text{for}~0\le n\le N-1.
\end{equation}
Here $r(\lambda)$ and the sequence $\{p_i(\lambda)\}^m_{i=1}$ are rational functions, and ${c_i}$ are distinct real numbers in $[0,1]$.
Below we impose the following conditions on the scheme \eqref{eqn:semi}:
\begin{itemize}
	\item[{\bf (P1):}] {For any $\lambda > 0$, $|r(\lambda)| < 1$, and $|p_i(\lambda)| < c$ uniformly for all $i = 1, \ldots, m$, where $c$ is a constant.} Further, the numerator of $p_i(\lambda)$
	is of strictly lower degree than the denominator.
	\item[{\bf (P2):}] The scheme \eqref{eqn:semi}  is accurate of order $q$ in the sense that
	$$  r (\lambda) = e^{-\lambda} + \mathcal{O}(\lambda^{q+1}),\quad \text{as}~\lambda\rightarrow0. $$
	Additionally, for every $0\le j\le q$,
	$$ \sum_{i=1}^m c_i^j p_i(\lambda) - \frac{j!}{(-\lambda)^{j+1}}\Big(e^{-\lambda}
	- \sum_{\ell=0}^j \frac{(-\lambda)^\ell}{\ell!}\Big) = \mathcal{O}(\lambda^{q-j}),\quad \text{as}~\lambda\rightarrow0. $$
	\item[{\bf (P3):}] The rational function $r(\lambda)$ is strongly stable in the sense that $|r(\infty)|<1$.
\end{itemize}

See, e.g., the monograph \cite[p. 131]{thomee2007galerkin} for the construction of rational functions satisfying properties {\bf{(P1)}}-{\bf{(P3)}}. {\bf{(P3)}} is crucial for ensuring the convergence of parareal iteration, especially for nonsmooth problem data (e.g., $u^0\in\mathcal{H}$). When $|r(\infty)| = 1$ (e.g., Crank-Nicolson and implicit Runge-Kutta of Gauss type), the convergence of the parareal method depends on two factors. First, the eigenvalues of $\A$ must be bounded from above, which is typically not the case for parabolic equations. Second, the ratio between the coarse and fine step sizes must be sufficiently large, with the precise value determined by the upper bound of the eigenvalues of $\A$; {see Remark \ref{rmk:J} below for further discussions on $J$}. Schemes not satisfying (\textbf{P3}) may lose the optimal convergence rate \cite[Chapter 8]{thomee2007galerkin}.
	
In practice, it is often desirable to choose $p_i(\lambda)$ sharing the denominator with $ r (\lambda)$, i.e., for polynomials $a_i(\lambda)$ and $g(\lambda)$ such that
\begin{equation*}
 r (\lambda) = \frac{a_0(\lambda)}{g(\lambda)}\quad \text{and} \quad p_i(\lambda) = \frac{a_i(\lambda)}{g(\lambda)},\quad \text{for}~i=1,2,\ldots,m.
\end{equation*}
Then the scheme \eqref{eqn:semi} can be written as
\begin{equation*}
g({\Delta t} \A) u^{n+1} = a_0({\Delta t} \A) u^{n} + {\Delta t} \sum_{i=1}^m a_i({\Delta t} \A) f(t_n+c_i\Delta t) ,\quad \text{for}~1 \le n\le N-1.
\end{equation*}

The following error estimate holds for the scheme \eqref{eqn:semi} \cite[Theorems 7.2 and 8.1]{thomee2007galerkin}.
\begin{lemma}\label{lem:conv-00}
Let conditions {\bf{(P1)}}-{\bf{(P3)}} be fulfilled,
$u(t)$ the solution to problem \eqref{eqn:pde}, and $u^n$
the solution to the scheme \eqref{eqn:semi}. Then there holds
		\begin{equation*}
			\|u^n - u(t_n)\|\leq
			c\, (\Delta t)^q \Big( t_n^{-q} \| u^0 \|
			+ t_n \sum_{\ell=0}^{q-1} \sup_{s\le t_n} \| \A^{q-\ell}f^{(\ell)}(s)  \| + \int_0^{t_n}
			\| f^{(q)}(s) \|\,\d s\Big),
		\end{equation*}
if $u_0\in \H$, $f^{(\ell)}\in C([0,T];\mathrm{Dom}(\A^{q-\ell})$ with $0\le \ell\le q-1$ and $f^{(q)} \in L^1(0,T; \H)$.
\end{lemma}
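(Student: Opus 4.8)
The plan is to diagonalize $\A$. Since $\A$ is positive definite and self-adjoint with compact inverse, it admits an orthonormal eigenbasis $\{\varphi_k\}$ with eigenvalues $0<\lambda_1\le\lambda_2\le\cdots\to\infty$, so that a bounded function $\phi$ of $\A$ acts by $\phi(\A)\varphi_k=\phi(\lambda_k)\varphi_k$ and $\|\phi(\A)\|\le\sup_{\lambda>0}|\phi(\lambda)|$. By Duhamel's formula $u(t_n)=e^{-t_n\A}u^0+\int_0^{t_n}e^{-(t_n-s)\A}f(s)\,\d s$, while iterating \eqref{eqn:semi} gives the discrete Duhamel form $u^n=r(\Delta t\A)^n u^0+\Delta t\sum_{j=0}^{n-1}r(\Delta t\A)^{n-1-j}\sum_{i=1}^m p_i(\Delta t\A)f(t_j+c_i\Delta t)$. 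I would split the error $u^n-u(t_n)$ into a homogeneous part driven by $u^0$ and an inhomogeneous part driven by $f$, and estimate the two separately. It is convenient to set $w=u-v$, with $v$ solving the homogeneous problem, so that $w(0)=0$ and $w$ inherits the regularity of $f$.

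For the homogeneous part the estimate reduces, via the spectral calculus, to the scalar kernel bound $\sup_{x>0}|r(x)^n-e^{-nx}|\le c\,n^{-q}$; granting it, $\|(r(\Delta t\A)^n-e^{-t_n\A})u^0\|\le c\,n^{-q}\|u^0\|=c\,(\Delta t)^q t_n^{-q}\|u^0\|$, since $(\Delta t)^q t_n^{-q}=n^{-q}$. To prove the kernel bound I would split the range of $x$. For small $x\in(0,\delta]$ I would use the factorization $r(x)^n-e^{-nx}=(r(x)-e^{-x})\sum_{k=0}^{n-1}r(x)^k e^{-(n-1-k)x}$ together with the consistency estimate $|r(x)-e^{-x}|\le c\,x^{q+1}$ from \textbf{(P2)} and the bound $|r(x)|\le e^{-\beta x}$ (valid near $0$ because $r(0)=1$, $r'(0)=-1$), and then maximize $x^{q+1}e^{-\beta(n-1)x}$ in $x$ to extract the factor $n^{-q}$. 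For $x>\delta$ I would invoke \textbf{(P1)} and \textbf{(P3)} to get $\sup_{x\ge\delta}|r(x)|=:\rho<1$ — here \textbf{(P3)}, i.e.\ $|r(\infty)|<1$, is exactly what forbids a supremum equal to $1$ at infinity — so that $|r(x)^n|\le\rho^n$ and $e^{-nx}\le e^{-n\delta}$ both decay exponentially and are dominated by $n^{-q}$.

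For the inhomogeneous part I would write $w^n-w(t_n)=\sum_{j=0}^{n-1}r(\Delta t\A)^{n-1-j}\rho^j$ with local error $\rho^j=(r(\Delta t\A)-e^{-\Delta t\A})w(t_j)+\big[\Delta t\sum_i p_i(\Delta t\A)f(t_j+c_i\Delta t)-\int_{t_j}^{t_{j+1}}e^{-(t_{j+1}-s)\A}f(s)\,\d s\big]$; note that only a single-step exact propagator enters $\rho^j$, so no ``bulk'' propagator mismatch arises. The first term I would bound via $|r(x)-e^{-x}|\le c\,x^{q+1}$, contributing $(\Delta t)^{q+1}\|\A^{q+1}w(t_j)\|$, and then trade powers of $\A$ for time derivatives of $f$ by iterating $\A w=f-w'$, which expresses $\A^{q+1}w(t_j)$ through the data $\A^{q-\ell}f^{(\ell)}(t_j)$. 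For the quadrature bracket I would Taylor-expand $f$ about $t_j$: after the substitution $s=t_j+\sigma\Delta t$, the operator weighting the $\ell$-th coefficient $\tfrac{(\Delta t)^{\ell+1}}{\ell!}f^{(\ell)}(t_j)$ is $\sum_i c_i^\ell p_i(\Delta t\A)$ for the scheme and $\int_0^1\sigma^\ell e^{-(1-\sigma)\Delta t\A}\,\d\sigma=\tfrac{\ell!}{(-\Delta t\A)^{\ell+1}}\big(e^{-\Delta t\A}-\sum_{k=0}^\ell\tfrac{(-\Delta t\A)^k}{k!}\big)$ for the exact integral, and their difference is precisely the quantity appearing in the second part of \textbf{(P2)}, hence $\mathcal O(\lambda^{q-\ell})$ and of norm $\le c(\Delta t)^{q-\ell}\|\A^{q-\ell}\cdot\|$. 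Summing the resulting $n$ contributions (using $\|r(\Delta t\A)\|\le1$ by \textbf{(P1)}) produces the factor $n(\Delta t)^{q+1}=(\Delta t)^q t_n$ and hence $t_n(\Delta t)^q\sum_{\ell=0}^{q-1}\sup\|\A^{q-\ell}f^{(\ell)}\|$, while the integral Taylor remainders, kept under the integral, yield $(\Delta t)^q\int_0^{t_n}\|f^{(q)}(s)\|\,\d s$.

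The main obstacle is the inhomogeneous estimate, specifically matching the exact form of the right-hand side. Two points require genuine care. First, the power count forces $(\Delta t)^{q+1}$ per step (choosing the exponent $q+1$ in $|r(x)-e^{-x}|\le c\,x^{q+1}$), which brings in the top-level term $\A^{q+1}w$; but the assumption only provides $f^{(q)}\in L^1$, not a pointwise bound, so the highest-order contribution must be represented through an integral over $[t_j,t_{j+1}]$ and summed to $\int_0^{t_n}\|f^{(q)}\|\,\d s$ rather than bounded by a supremum — this is exactly why the theorem's last term is an $L^1$-in-time quantity and why the $\ell$-sum stops at $q-1$. Second, a subtlety tied to the nonsmooth data $u^0\in\H$ is that the homogeneous part cannot be treated by summing local truncation errors, since that would require $\|\A^{q+1}v(t_j)\|\sim t_j^{-(q+1)}$, which is non-summable near $t_j=0$; this is precisely why the uniform kernel estimate $|r(x)^n-e^{-nx}|\le c\,n^{-q}$, and with it the strong-stability condition \textbf{(P3)}, is the indispensable tool there.
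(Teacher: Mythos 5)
The paper itself offers no proof of this lemma: it is quoted verbatim from Thom\'ee's book (Theorems 7.2 and 8.1), so your attempt has to be judged against that classical argument. Your homogeneous part is correct and is exactly the classical one: spectral reduction to the scalar bound $\sup_{x>0}|r(x)^n-e^{-nx}|\le cn^{-q}$, proved by splitting small and large $x$, with \textbf{(P3)} ruling out a supremum of $|r|$ equal to one at infinity. Your quadrature analysis, via Taylor expansion of $f$ at $t_j$ and the second condition in \textbf{(P2)}, with the remainder kept in $L^1$ form, is also right. The gap is in the remaining piece of your local error $\rho^j$, namely $[r(\Delta t\A)-e^{-\Delta t\A}]w(t_j)$, which you propose to bound by $c(\Delta t)^{q+1}\|\A^{q+1}w(t_j)\|$ and then rewrite by iterating $\A w=f-w'$. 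Carrying out that iteration gives
\begin{equation*}
\A^{q+1}w(t_j)=\sum_{\ell=0}^{q-1}(-1)^\ell \A^{q-\ell}f^{(\ell)}(t_j)+(-1)^{q}\A w^{(q)}(t_j),
\end{equation*}
and, by Duhamel applied to $(w^{(q)})'+\A w^{(q)}=f^{(q)}$,
\begin{equation*}
\A w^{(q)}(t_j)=\A e^{-t_j\A}w^{(q)}(0)+\A\int_0^{t_j}e^{-(t_j-s)\A}f^{(q)}(s)\,\d s .
\end{equation*}
Neither of these last two terms is controlled by the lemma's hypotheses: the first behaves like $t_j^{-1}\|w^{(q)}(0)\|$, and the second cannot be bounded by $\int_0^{t_j}\|f^{(q)}(s)\|\,\d s$ because $\|\A e^{-(t_j-s)\A}\|\sim(t_j-s)^{-1}$ is not integrable at $s=t_j$. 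So under the stated regularity ($f^{(q)}$ only in $L^1$, the $\ell$-sum stopping at $q-1$), the quantity $\|\A^{q+1}w(t_j)\|$ that your per-step bound requires is simply not available. Your own ``first point of genuine care'' does not repair this: the dependence of $\A^{q+1}w(t_j)$ on $f^{(q)}$ is through the whole history $[0,t_j]$, not through the single step $[t_j,t_{j+1}]$ where you propose to keep it as an integral.

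The missing idea is precisely the ``bulk propagator mismatch'' that you advertise as avoided. Writing $w(t_j)=\sum_{i<j}e^{-(j-i-1)\Delta t\A}\omega_i$ with $\omega_i=\int_{t_i}^{t_{i+1}}e^{-(t_{i+1}-s)\A}f(s)\,\d s$ and exchanging the order of summation, your problematic sum telescopes:
\begin{equation*}
\sum_{j=0}^{n-1}r(\Delta t\A)^{n-1-j}\bigl[r(\Delta t\A)-e^{-\Delta t\A}\bigr]w(t_j)=\sum_{i=0}^{n-2}\bigl[r(\Delta t\A)^{n-1-i}-e^{-(n-1-i)\Delta t\A}\bigr]\omega_i .
\end{equation*}
This cancellation across steps is what saves the factor $n$; one then concludes with the uniform smooth-data estimate $\|[r(\Delta t\A)^m-e^{-m\Delta t\A}]\A^{-q}\|\le c(\Delta t)^q$ (valid for all $m\ge0$ under \textbf{(P1)}--\textbf{(P2)}, proved by the same small/large-$x$ splitting as your kernel bound), so that $\|\A^q\omega_i\|\le \Delta t\,\sup_{s\le t_n}\|\A^qf(s)\|$ yields the contribution $c(\Delta t)^q t_n\sup_{s\le t_n}\|\A^qf(s)\|$. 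This re-summed decomposition plus the two kernel estimates is exactly how the cited proof of Thom\'ee's Theorem 8.1 is organized; without it (or an extra strict-accuracy hypothesis on the scheme), your per-step, pointwise-in-time estimate cannot close under the stated data regularity.
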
\medskip
	
\begin{remark}\label{rem:conv-00}
By Lemma \ref{lem:conv-00}, under conditions {\bf{(P1)}}-{\bf{(P3)}}, the solution $u^n$ of the scheme \eqref{eqn:semi} converges to the exact one $u(t_n)$ at an order $\mathcal{O}((\Delta t)^q)$, if $f$ and $u_0$ fulfill certain compatibility conditions. For example, when \(\A = -\Delta\), equipped with a zero Dirichlet boundary condition, the requirement is \( (-\Delta)^{\ell} f^{(q-\ell)} = 0 \) on \(\partial\Omega\) for \(0 \le \ell \le q\). To circumvent these stringent conditions, the scheme \eqref{eqn:semi} is assumed to be strictly accurate of order $q$ such that
\[
\sum_{i=1}^m c_i^j p_i(\lambda) - \frac{j!}{(-\lambda)^{j+1}}\Big( r (\lambda)
		- \sum_{\ell=0}^j \frac{(-\lambda)^\ell}{\ell!}\Big) = 0, \quad \text{for} ~0\le j\le q-1.
\]
A scheme is said to be strictly accurate of order $p_0 < p$ if the truncation error vanishes for all $f$ and $u_0$ such that the solution is a polynomial in $t$ of degree at most $p_0-1$.
While a single step method with a given $m \in \mathbb{N}$ might be accurate up to order $2m$ (e.g., Gauss--Legendre method) \cite[Section 2.2]{ehle1969pade}, it can be strictly accurate of {order only up to} \(m+1\) \cite[Lemma 5]{brenner1982single}.
\end{remark}\medskip
	
\begin{remark}\label{rem:conv-01}
The error estimate in Lemma \ref{lem:conv-00} can be improved if the integrator exhibits L-stability, i.e., $r(\infty) = 0$ \cite[Theorem 7.2]{thomee2007galerkin}.
\end{remark}
	
\subsection{Parareal algorithm}\label{ssec:parareal}
Now we present the parareal algorithm for the single step integrator \eqref{eqn:semi}. Let $\Delta T = J \Delta t$ be the coarse step size, $N_c = T/\Delta T\in \mathbb{N}$, and denote $T_n = n\Delta T$. The numerical propagators $\G$ and $\F$ use the coarse and fine time grids, respectively. Typically, $\G$ is an inexpensive, low-order method (e.g., backward Euler scheme), whereas $\F$ is defined by the single step integrator \eqref{eqn:semi}. For any given initial data $v\in \H$ and $f\in C([0,T];\H)$, the coarse and fine propagators are respectively defined by
\begin{align}
	\G(T_n,\Delta T,v,f) &= R({\Delta T} \A) v + {\Delta T} \sum_{i=1}^{M} P_i({\Delta T} \A) f(T_n+C_i\Delta T) , \label{eqn:coarse-integrator}\\
	\F(t_n,\Delta t,v,f) &= r({\Delta t} \A) v + {\Delta t} \sum_{i=1}^{m} p_i({\Delta t} \A) f(t_n+c_i\Delta t) ,\label{eq: integrator}
\end{align}
where $M$ and $m$ denote the numbers of stages of the CP $\mathcal{G}$ and FP $\mathcal{F}$, respectively, and $R$ and $P$ are rational functions for the CP $\mathcal{G}$, and $r$ and $p$ for the FP $\mathcal{F}$. The parareal iteration is given in Algorithm \ref{alg:para}.
	
\begin{algorithm}[ht]
	\center
		\caption{Parareal algorithm.}
		\begin{algorithmic}[1]\label{alg:para}
			\STATE \textbf{Initialization}:
			Compute
			$U_0^{n+1} = \G(T_n,\Delta T,{U_0^{n}}, f)$ with $U_0^0 =u^0$, $n =0,1,...,N_c - 1$;
			\FOR {$k=0,1,\ldots,K$}
			\STATE On each subinterval $[T_n,T_{n+1}]$, sequentially compute for $j=0,1,2,\dots,J-1$
			$$  \widetilde U^{n, j+1}_{k} = \F(T_n+j \Delta t, \Delta t, \widetilde U^{n, j}_{k},f), $$
			with initial value $ \widetilde U^{n, 0}_{k} = U_k^n$, and set $\widetilde U^{n+1}_{k} =  \widetilde U^{n, J}_{k}$.\vskip5pt
			\STATE Perform sequential corrections, i.e., find $U_{k+1}^{n+1}$ by
			$$  U_{k+1}^{n+1} = \G(T_n, \Delta T, U_{k+1}^{n}, f)  + \widetilde U^{n+1}_{k} -   \G(T_n, \Delta T, U_{k}^{n}, f) $$
			with $U_{k+1}^0 = u^0$, for $n =  0,1,...,N_c - 1$;\vskip5pt
			\STATE Check the stopping criterion.
			\ENDFOR
		\end{algorithmic}
\end{algorithm}

The convergence of parallel algorithms has been extensively studied  (see, e.g., \cite{Bal:2005,BrehierWang:2020,GanderVandewalle:2007,Mathew:2010,Wu:IMA2015,WuZhou:2015,FriedhoffSouthworth:2021,yang2021robust} for a rather incomplete list).
For linear evolution equations, Yang et al \cite{yang2021robust} proved that with the backward Euler scheme as the CP, there exists a threshold $J_*>0$,
independent of $\Delta T$, $\Delta t$ and the upper bound of the spectrum of $\A$, such that
under conditions {\bf{(P1)}}-{\bf{(P3)}}, if $J\ge J_*$, then the following bound holds with $\gamma \approx 0.3$,
\begin{equation}\label{eqn:conv-1}
	\max_{1 \le n\le N_c}\|  U_{k}^n - u^{nJ}  \| \le  c \,\gamma^k,
\end{equation}
where $U_k^n$ denotes the parareal solution, and $u^{nJ}$ the fine time stepping solution. Numerically the estimate \eqref{eqn:conv-1} is observed to be sharp. This issue arises with other types of CPs; the work \cite{FriedhoffSouthworth:2021} reported an optimal mesh-independent convergence factor $\gamma \approx 0.26$ when both CP and FP are  the two-stage, second-order singly diagonally implicit Runge  Kutta scheme (SDIRK-22), and $\gamma \approx 0.15$ when both are SDIRK-33, for  $J\in \{4, 8, 16, 32,64\}$. These observations naturally motivate the following question:
\begin{quote} \textit{Are there CPs capable of overcoming these barriers, significantly improving convergence, while maintaining robustness across different $J$ values?}
\end{quote}
We aim at {optimizing} CPs to accelerate the convergence of the parareal algorithm.

\section{Optimized coarse propagators}\label{sec:TPC}

Now we develop a systematic strategy to construct CPs via optimization, which yields an {Optimized CP (OCP)} that achieves a convergence factor  $\gamma \approx 0.02$ when $J\in [16,128]$ for linear evolution problems, thereby overcoming the barrier \eqref{eqn:conv-1} on the convergence rate. We also discuss basic properties of the resulting parareal algorithm.

\subsection{Error estimation}\label{ssec:error-estimation}
Now we present the error estimation framework in the work \cite{GanderVandewalle:2007,yang2021robust} by incorporating a general coarse/fine propagator. See also a tighter convergence bound in \cite[Theorem 30]{Southworth:2019}.
Consider the coarse and fine propagators defined in \eqref{eqn:coarse-integrator}--\eqref{eq: integrator}, which both satisfy conditions {\bf{(P1)}}-{\bf{(P3)}}. For the difference between the solution $U_{k+1}^{n+1}$ by the parareal time stepping scheme and the fine grid solution $u^{(n+1)J}$,
we deduce
\begin{align*}
	&U_{k+1}^{n+1} - u^{(n+1)J} = R(\Delta T\A)\Big[(U_{k+1}^{n} - u^{nJ})
	- (U_{k}^{n} - u^{nJ})\Big] \\
	& \quad + \F(T_n+(J-1)\Delta t, \Delta t, \widetilde U^{n,J-1}_{k}, f)
	- \F(T_n+(J-1)\Delta t, \Delta t, u^{(n+1)J-1}, f) \\
	=& R(\Delta T\A)\Big[(U_{k+1}^{n} - u^{nJ})
	- (U_{k}^{n} - u^{nJ})\Big]
	+ r(\Delta t \A) (\widetilde U^{n,J-1}_{k}- u^{(n+1)J-1})\\
	=& R(\Delta T\A)\Big[(U_{k+1}^{n} - u^{nJ})
	- (U_{k}^{n} - u^{nJ})\Big]
	+ r(\Delta t \A)^J (\widetilde U^{n,0}_{k}- u^{nJ})\\
	=& R(\Delta T\A)\Big[(U_{k+1}^{n} - u^{nJ})
	- (U_{k}^{n} - u^{nJ})\Big]
	+ r(\Delta t \A)^J ( U_k^{n}- u^{nJ}).
\end{align*}
Upon letting $E_{k}^{n} = U_k^n - u^{nJ}$, we can rewrite the identity by
\begin{equation*}
	E_{k+1}^{n+1} = R(\Delta T\A) (E_{k+1}^n - E_{k}^{n})+r(\Delta t A)^J E_{k}^{n}.
\end{equation*}
Recall that the linear operator $\A$ is positive definite and selfadjoint with a compact inverse
on Hilbert space $(\H, (\cdot , \cdot ))$. By the spectral theory for compact operators \cite{Yosida:1995},
$\A$ has positive eigenvalues $\{\lambda_j\}_{j=1}^\infty$, where $0<\lambda_1\le \lambda_2 \le \ldots $ and $\lambda_j\rightarrow\infty$,
and the corresponding eigenfunctions $\{\phi_j\}_{j=1}^\infty$ form an orthonormal basis of the Hilbert space $\H$.
Then, with $e_{k,j}^n = (E_{k}^{n}, \phi_j)$, by means of spectral decomposition, we derive
\begin{equation*}
	e_{k+1,j}^{n+1} = R(\Delta T\lambda_j)(e_{k+1,j}^n-e_{k,j}^n) +  r(\Delta t \lambda_j)^J e_{k,j}^{n}.
\end{equation*}
Upon setting $d_j = \Delta T \lambda_j$, we get
\begin{equation*}
	e_{k+1,j}^{n+1} =R(d_j) e_{k+1,j}^n +  (r(d_j/J)^J -R(d_j))e_{k,j}^{n}.
\end{equation*}
Applying the recursion repeatedly and noting the condition $ e_{k+1,j}^0=0$ yield
\begin{align*}
e_{k+1,j}^{n+1}&= ( r(d_j/J)^J -R(d_j)) e_{k,j}^n +R(d_j) e_{k+1,j}^n\\
	&= ( r(d_j/J)^J -R(d_j))( e_{k,j}^n+R(d_j) e_{k,j}^{n-1})+  R(d_j)^{2}e_{k+1,j}^{n-1}\\
	&= ( r(d_j/J)^J -R(d_j))( e_{k,j}^n +R(d_j) e_{k,j}^{n-1} + \dots + R(d_j)^{n-1}e_{k,j}^{1}).
\end{align*}
Now, taking the absolute value on both sides gives
\begin{align}\label{eq: error analysis}
		|e_{k+1,j}^{n+1} | &\leq | r(d_j/J)^J -R(d_j)| \cdot (1 +|R(d_j)| + \dots + |R(d_j)|^{n-1})
		\max_{1\le n\le N_c-1}{|e_{k,j}^{n}|} \nonumber \\
		&\leq \frac{|r(d_j/J)^J - R(d_j)| }{1-|R(d_j)|}  \max_{1\le n\le N_c-1}{|e_{k,j}^{n}|}  \nonumber \\
		&\leq \sup_{s \in \{\Delta T\lambda_j\}_{j=1}^{\infty}} \frac{| r(s/J)^J-R(s)| }{1-|R(s)|} \max_{1\le n\le N_c-1}{|e_{k,j}^{n}|}.
\end{align}
This inequality naturally suggests the following componentwise convergence factor
\begin{equation*}
	\kappa (r,R,J,s)=\frac{r(s/J)^{J}-R(s)}{1-|R(s)|},
\end{equation*}
and likewise the convergence factor for any fixed $J_{0}$ is defined by
\begin{equation}\label{eq: Lambda}
	\kappa_{c} (r, R; J_{0})=\sup_{s\in \Lambda} | \kappa (r,R,J_{0},s) |, \quad\mbox{with } \Lambda = \{\Delta T\lambda_j\}_{j=1}^{\infty}.
\end{equation}
Note that $J_0$ is the ratio of the coarse and fine time step sizes that will be used to optimize the CP, cf. Algorithm \ref{algo:coarse}. 
{The componentwise convergence factor is  related to the bound in the pioneer work \cite{dobrev2017two}.}

\begin{remark}\label{rmk:J}
{Throughout this work, we have assumed that the stability function $r(s)$ of the FP is strongly stable, i.e., $|r(\infty)| < 1$. This choice allows great flexibility in the time ratio $J$. In contrast, if $|r(\infty)| = 1$ and $|R(s)| < 1$, then $r(s/J)^J - R(s)$ can be made small for large $s$ only if $J$ is sufficiently large. However, 
a larger 
$J$ implies fewer parallel time slices (and using fewer processors),  limiting the efficiency of parallel computing. Therefore, solvers with $|r(\infty)|=1$, e.g., the Crank--Nicolson scheme, are seldom used as the FPs in parareal algorithms.}
\end{remark}
	
The next result shows that $\kappa_{c} (r, R; J_{0})$ determines the convergence of the parareal method:
If $\kappa_{c}(r, R; J_{0})<1$, then $\max_{1\le n\le N_c} \Vert E_{k}^{n}\Vert \to 0$ as $k \rightarrow \infty$.
That is, the solution $U_k^n$ by the parareal algorithm converges to the fine time stepping solution $u^{nJ}$ as
$k\rightarrow\infty$.

\begin{theorem}\label{thm: estimate for para}
Let conditions {\bf{(P1)}}-{\bf{(P3)}} be fulfilled and the data regularity assumption in Lemma \ref{lem:conv-00} hold.
Let $u^{n}$ be the solution by the FP \eqref{eq: integrator}, and $U^{n}_{k}$ the solution by
Algorithm \ref{alg:para}. Then the following error estimate holds
\begin{equation*}
\max_{1\leq n\leq N_{c}} \Vert U^{n}_{k}-u^{nJ}\Vert  \leq c \kappa_{c} ( r,R;J)  ^{k}.
\end{equation*}
\end{theorem}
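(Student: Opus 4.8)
The plan is to build directly on the componentwise recursion already assembled above, and to promote the per-mode contraction in \eqref{eq: error analysis} to a bound on the full Hilbert-space norm via the spectral decomposition of $\A$. First, I would fix an eigenmode index $j$ and read \eqref{eq: error analysis} as a one-step-in-$k$ contraction: since $e_{k+1,j}^0 = 0$, taking the maximum over the upper index gives
\begin{equation*}
\max_{1\le n\le N_c} |e_{k+1,j}^n| \le \kappa_c(r,R;J)\, \max_{1\le n\le N_c} |e_{k,j}^n|,
\end{equation*}
where the factor is exactly $\kappa_c(r,R;J) = \sup_{s\in\Lambda}|\kappa(r,R,J,s)|$ from \eqref{eq: Lambda}, because $d_j = \Delta T\lambda_j \in \Lambda$. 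Iterating this scalar inequality in $k$ then yields $\max_{1\le n\le N_c}|e_{k,j}^n| \le \kappa_c(r,R;J)^k \max_{1\le n\le N_c}|e_{0,j}^n|$ for every mode $j$.

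Next I would reassemble the modes. Using that $\{\phi_j\}_{j=1}^\infty$ is an orthonormal basis of $\H$, Parseval's identity gives $\|E_k^n\|^2 = \sum_j |e_{k,j}^n|^2$. Bounding each $|e_{k,j}^n|$ by its maximum over $n$ and inserting the per-mode estimate from the previous step produces
\begin{equation*}
\max_{1\le n\le N_c}\|E_k^n\|^2 \le \kappa_c(r,R;J)^{2k} \sum_j \Big(\max_{1\le m\le N_c}|e_{0,j}^m|\Big)^2 \le \kappa_c(r,R;J)^{2k} \sum_{m=1}^{N_c}\|E_0^m\|^2,
\end{equation*}
where the last step replaces the mode-dependent maximizer by the full sum over $m$. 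Taking square roots identifies the constant as $c = \big(\sum_{m=1}^{N_c}\|E_0^m\|^2\big)^{1/2}$, i.e. the size of the initialization error $E_0^m = U_0^m - u^{mJ}$.

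It remains to verify that this initialization error is finite and controlled by the data. I would split $E_0^m = (U_0^m - u(T_m)) - (u^{mJ} - u(T_m))$ and bound the coarse and fine contributions separately by Lemma~\ref{lem:conv-00}, applied to $\G$ with step $\Delta T$ and to $\F$ with step $\Delta t$, respectively; under the stated data-regularity assumption both are $\mathcal{O}((\Delta T)^q)$ and $\mathcal{O}((\Delta t)^q)$ and uniformly bounded in $m$ (the singular factor $T_m^{-q}$ is absorbed since $T_m \ge \Delta T$), so $c$ is finite. Alternatively, the stability bounds $|R|<1$ and $|r|<1$ from {\bf (P1)} control $\|U_0^m\|$ and $\|u^{mJ}\|$ directly.

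The main obstacle is not a single hard estimate but the careful passage from the scalar, mode-by-mode contraction to the operator norm: one must justify the spectral calculus for the bounded rational functions $R(\Delta T\A)$ and $r(\Delta t\A)^J$ of the self-adjoint operator $\A$, and handle the fact that the index $n$ maximizing $|e_{0,j}^m|$ depends on the mode $j$. The latter forces the crude bound $\big(\max_m|e_{0,j}^m|\big)^2 \le \sum_m |e_{0,j}^m|^2$, which is what introduces the dependence of $c$ on $N_c$; sharpening this to remove the $N_c$-factor would require a more delicate simultaneous treatment of all modes and is not needed for the stated geometric-in-$k$ conclusion.
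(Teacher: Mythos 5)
Your proposal follows the same route as the paper's proof: the componentwise contraction from \eqref{eq: error analysis}, Parseval's identity, the crude bound $\max_n|e_{0,j}^n|^2\le\sum_n|e_{0,j}^n|^2$, and control of the initialization error via Lemma~\ref{lem:conv-00}. Up to the identification of the constant, every step is sound. The gap is in that last step, and in your diagnosis of it. You bound the initialization error only \emph{uniformly} in $m$ (absorbing the singular factor $T_m^{-q}$ via $T_m\ge\Delta T$), which leaves you with $c=\bigl(\sum_{m=1}^{N_c}\|E_0^m\|^2\bigr)^{1/2}\le C\sqrt{N_c}$. A constant growing like $\sqrt{N_c}$ is not what the theorem asserts: here, as in the benchmark estimate \eqref{eqn:conv-1}, the generic constant $c$ must be independent of the discretization parameters, in particular of the number of coarse intervals $N_c$; otherwise the bound degenerates as the time mesh is refined, which defeats the purpose of a parareal convergence estimate.

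The fix does not require, as you suggest, ``a more delicate simultaneous treatment of all modes''; the crude max-to-sum bound is exactly what the paper uses as well. What it requires is keeping the decay in $m$ that Lemma~\ref{lem:conv-00} already provides. With $q\ge1$, the initial-data term gives $(\Delta T)^q T_m^{-q}\|u^0\|=m^{-q}\|u^0\|\le m^{-1}\|u^0\|$, and for the forcing terms one uses $\Delta T\le T/m$ (valid since $m\le N_c$) to convert $(\Delta T)^q$ into $C\,m^{-q}\le C\,m^{-1}$; the fine-propagator contribution $\|u(T_m)-u^{mJ}\|$ is handled the same way with step $\Delta t\le T/m$. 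This yields $\|E_0^m\|\le c\,m^{-1}$, i.e.\ the paper's estimate \eqref{eqn:conv-3}, whence $\sum_{m=1}^{N_c}\|E_0^m\|^2\le c^2\sum_{m\ge1}m^{-2}$ is bounded independently of $N_c$, and the proof closes with a mesh-independent constant. Your alternative suggestion of bounding $\|U_0^m\|$ and $\|u^{mJ}\|$ directly via the stability conditions $|R|<1$ and $|r|<1$ suffers from the same defect: it gives boundedness but no decay, hence again a $\sqrt{N_c}$ factor.
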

\begin{proof}
In Algorithm \ref{alg:para}, the initial guess $U_0^n$ is obtained by the coarse propagator $\mathcal{G}$. Then Lemma \ref{lem:conv-00} (with $q \geq 1$) implies
\begin{align}\label{eqn:conv-3}
	\| U_0^n - u^{nJ} \| &\le \| U_0^n - u(T_n)\|  + \|u(T_n)- u^{nJ} \|  \le c n^{-1}.
\end{align}
With $E_k^n =U_k^n - u^{nJ}$ and $e_{k,j}^n = (E_k^n, \phi_j)$, then the relation \eqref{eq: error analysis} implies
\begin{align*}
\max_{1\le n\le N_c} \|  E_k^n \|^2
& \le  \sum_{j=1}^\infty \max_{1\le n\le N_c} | e_{k,j}^n |^2 \le {\kappa_{c} ( r,R;J) }^2  \sum_{j=1}^\infty \max_{1\le n\le N_c} | e_{k-1,j}^n |^2\\
& \le \cdots     \le \kappa_{c} ( r,R;J) ^{2k}  \sum_{j=1}^\infty \max_{1\le n\le N_c} | e_{0,j}^n |^2.
\end{align*}
This and the inequality $\max_{1\le n\le N_c} | e_{0,j}^n |^2 \le  \sum_{n=1}^{N_c} | e_{0,j}^n |^2$ yield
\begin{align*}
\max_{1\le n\le N_c} \|  E_k^n \|^2  &\le c \kappa_{c} ( r,R;J) ^{2k}  \sum_{j=1}^\infty  \sum_{n=1}^{N_c} | e_{0,j}^n |^2
\le  c \kappa_{c} ( r,R;J) ^{2k}   \sum_{n=1}^{N_c} \| E_{0}^n \|^2\\
&\le  c^{2} \kappa_{c} ( r,R;J) ^{2k}   \sum_{n=1}^{N_c} n^{-2}
\le c^{2} \kappa_{c} ( r,R;J) ^{2k} ,
\end{align*}
where the second last inequality is due to the estimate \eqref{eqn:conv-3}.
\end{proof}

\subsection{{Optimizing} coarse propagator}
Given a high-order single step scheme as the FP $\mathcal{F}$,  we shall learn a CP
to expedite the convergence of parareal iteration. We consider a parametric stability function $R(\lambda,\bt)$
in terms of $\lambda$, defined by
\begin{equation}\label{eqn:stability-parametric}
	R( \lambda,\bt)  =\frac{P_{n}(\lambda,\bt)  }{Q_{m}( \lambda,\bt)  } =\frac{\sum^{n}_{i=0} a_{i}\lambda^{i} }{1+\sum^{m}_{i=1} e^{b_{i}} \lambda^{i} }.
\end{equation}
The parameters is grouped as $\bt=( b_{1},...,b_q, b_{q+1},...,b_m,a_{q+1},...,a_n)$, 
{with $q < n \leq m$. Note that $( a_0,...,a_q)$ is excluded from the vector $\bt$.} We discuss this exclusion and the parameter $q$ below. The exponential representation in the denominator of $R(\lambda,\t)$ is to enhance the training stability. For any optimal $\bt^*$, according to conditions {\bf{(P1)}}-{\bf{(P3)}}, ideally, the {optimized} stability function $R( \lambda,\bt^*)$ in \eqref{eqn:stability-parametric} of the OCP $\mathcal{G}_{\bt^*}$ should satisfy \begin{enumerate}
\item[(i)] {\bf (P1), (P3)}: The OCP $\mathcal{G}_{\bt^*}$ is stable and strongly stable, i.e.,
\begin{equation*}
|  R(  \lambda,\bt^*)  | < 1 ~\text{for}~ \lambda > 0 ~\text{and}~ |R(  \infty,\bt^*) | < 1 .
\end{equation*}
\item[(ii)] {\bf (P2)}: The OCP $\mathcal{G}_{\bt^*}$ has an accuracy of order $q$, i.e., $R(\lambda,\bt^*)  = e^{-\lambda} + \mathcal{O}(\lambda^{q+1})$.
\end{enumerate}

Inspired by these requirements, we define the error function $e(\lambda,\bt^*)$ by
\begin{equation*}
	e( \lambda,\bt^*)  =e^{-\lambda } -R(\lambda,\bt^*)  =e^{-\lambda }-\frac{P_{n}( \lambda,\bt^*)  }{Q_{m}( \lambda,\bt^*)  } ,
\end{equation*}
and let $E( \lambda,\bt^*)  \coloneqq e( \lambda,\bt^*)  Q_{m}( \lambda ,\bt^*)  =e^{-\lambda }Q_{m}( \lambda,\bt^*)  -P_{n}(\lambda,\bt^*)$.
Given $q < n$,  {we request $E^{(k)}(0,\bt^*) = 0,\  k=0,1,...,q$, i.e.,}
\begin{equation*}
	\frac{{\rm d}^{k}}{{\rm d}\lambda^{k} }( e^{-\lambda }Q_{m}) ( 0,\bt^*)  =\frac{{\rm d}^{k}}{{\rm d}\lambda^{k} } P_{n}( 0, \bt^*),  \quad  k=0,1,\cdot \cdot \cdot ,q.
\end{equation*}
By Taylor expansion, with $e^{-\lambda }=\sum^{\infty }_{i=0} c_{i}\lambda^{i}$, we obtain
\begin{equation}\label{eq:determine a_1}
	\sum^{k}_{j=0} c_{j}e^{b_{k-j}}=a_{k},\quad  k=0,1,\cdot \cdot \cdot ,q.
\end{equation}
That is, the parameters $a_k$, $k=0,\ldots,q$, is fully determined by $(b_1,\ldots,b_q)$, one block of $\bt$, under the consistency constraint, and it suffices to optimize only the parameters $\bt$.
	
The optimized stability function $R(\lambda,\bt^*)$ is chosen to minimize the convergence factor $\kappa_{c}(r,R;J_{0})$. However, it is inconvenient to compute $\Lambda$ since $\lambda_{j}\rightarrow \infty$. So we approximate the operator $\A$ with a matrix $A_h$ \cite[Section 1]{thomee2007galerkin}, and approximate the spectrum $\Lambda$ in \eqref{eq: Lambda} with the discrete spectrum $\Lambda_h$ (associated with $A_h$). In practice, this requires deriving upper and lower eigenvalue bounds of $A_h$, e.g., using Rayleigh quotient \cite{Golub:2013}. To approximate the set $\Lambda_h$, we sample uniformly {$N_\Lambda$ points} over the eigenvalue range of $\Lambda_h$. 
	
Now consider the {optimization} of a CP $\mathcal{G}_{\bt}$. Given the stability function $r(\lambda)$ of the FP $\mathcal{F}$,
we select the parameters $m$, $n$, $q$, $J_{0}$ and $N_\Lambda$. The resulting optimization problem reads
\begin{align}\label{eq:opt problem}
\underset{\bt}{\text{min}}\  \kappa_{c}(r,R;& J_0)\quad
\text{s.t.} \quad  \sum^{k}_{j=0} c_{j}e^{b_{k-j}}= a_{k}, \, k = 0,1,\dots,q, \, \mbox{and}\,
|R(\lambda,\bt)| < 1, ~ \lambda > 0.
\end{align}
These constraints explicitly enforce properties (\textbf{P1})--(\textbf{P3}), i.e., stability and convergence order. The construction naturally respects highly desirable features of an ideal CP.

To strictly enforce the condition $ |R(\lambda,\bt)| < 1$ for $\lambda > 0$, we define a set $\Lambda_{R}$ (with a cardinality $N_R$), which
encompasses all positive critical
points of $R(s,\bt)$ in $s$, for any fixed $\bt$. Indeed, the extremum of $R$ is achieved at critical points. Hence, it suffices to enforce the following conditions
\begin{equation*}
|R(s,\boldsymbol{\theta})|, |R(\infty,\boldsymbol{\theta})| < 1, \quad \text{for all } s \in \Lambda_R.
\end{equation*} Since the stability function $R(s,\bt)$ is known,
computing $\Lambda_R$ \textit{a priori} is feasible for a fixed $\bt$. Then it suffices to verify the condition $ |R(\lambda,\bt)| < 1$ for $\lambda \in \Lambda_{R}$. To illustrate this, consider the choice $m=n=2$ and $q=1$. Using the consistency condition, the parametric stability function $R\left( \lambda ,\bt\right)$ is given by 
\begin{equation*}
    R\left( \lambda ,\bt\right)  =\frac{1+\left( e^{b_{1}}-1\right)  \lambda +a_{2}\lambda^{2} }{1+e^{b_{1}}\lambda +e^{b_{2}}\lambda^{2} }.
\end{equation*}
Then using the first derivative 
\begin{equation*}
\frac{\d R(\lambda,\bt)}{\d\lambda} = \frac{ \lambda^2 (a_2 e^{b_1} + e^{b_2} - e^{b_1 + b_2}) - 2 \lambda ( e^{b_2} - a_2) - 1}{(\lambda^2 e^{b_2} + \lambda e^{b_1} + 1)^2},
\end{equation*}
we can determine the critical points $\lambda_1$ and $\lambda_2$ in closed form, and obtain the set $\Lambda_R = \{\max\{\lambda_1,0\},\max\{\lambda_2,0\}\}$. In practice, the integers $m$ and $n$ are small, in order to maintain low computational expense.
The choice of $\Lambda_h$ and $\Lambda_R$ is sufficient to ensure that conditions \textbf{(P1)} and \textbf{(P3)} hold.

We adopt the barrier method \cite[Chapter 13]{luenberger1984linear}, in order to strictly enforce the condition:
\begin{equation}\label{eq:loss_2}
	\mathcal{L}_{\rm b}(\boldsymbol{\theta}) = \frac{1}{N_\Lambda+N_R} \sum_{s \in \Lambda_h\cup\Lambda_R} \log(1 - R^2(s, \boldsymbol{\theta})) + {\log\left(1 - \left(\frac{a_n}{e^{b_m}}\right)^2\right).}
\end{equation}

Then the total loss $\mathcal{L}_{\rho}$ is given by
\begin{equation*}
    \mathcal{L}_\rho (\bt)= \mathcal{L}_{\rm s}(\bt)-\rho \mathcal{L}_{\rm b}(\bt),\quad \mbox{with }\mathcal{L}_{\rm s}(\bt) = \sup_{s\in \Lambda_h} \frac{ |r(s/J_{0})^{J_{0}}-R(s,\bt)|}{1-|R(s,\bt)|},
\end{equation*}
where $\rho>0$ is the weight for the barrier function. To ensure the convergence of the barrier method for problem \eqref{eq:opt problem} during optimization, we choose the weight $\rho$ by a path-following strategy: we initialize with $\rho_0$, and then geometrically decrease its value, i.e., $\rho_{i+1}=\beta\rho_i$, for some $\beta\in(0,1)$. Let $\theta_k^*$ be a minimizer of the loss $\mathcal{L}_{\rho_k}(\theta)$. Then any limit point of the sequence $\{\bt_k^*\}$ solves problem \eqref{eq:opt problem} \cite[Chapter 13]{luenberger1984linear}. The complete procedure for optimizing the CP $\mathcal{G}_{\bt}$ is given in Algorithm \ref{algo:coarse}. To minimize the loss $\mathcal{L}_\rho$, we employ subgradient descent; see \cite{goffin1977convergence, bianchi2022convergence, scaman2022convergence, khaled2020better} for relevant convergence analysis.

\begin{algorithm}[hbt!]
\caption{Optimizing the CP for the FP $\mathcal{F}$.}
\begin{algorithmic}[1]
\STATE Specify parameters $m,n,q,J_{0},N_\Lambda,$ $\rho_0$, $\beta$, and initialize $\bt$ randomly.
\STATE Compute the set $\Lambda_h$.
\FOR {$i=0,1,\ldots$}
\STATE $\rho_{i+1}=  \beta \rho_{i}.$
\FOR {$k=0,1,\ldots,K$}
\STATE Given $\bt$, find the set $\Lambda_{R}$.
\STATE Construct the loss  $\L_{\rho_i}(\bt) = \L_{\rm s}(\bt) - \rho_{i} \cdot \L_{\rm b}(\bt)$\red{.}
\STATE Minimize the loss $\mathcal{L}_{\rho_i}(\bt)$  and update $\bt$.
\ENDFOR
\STATE Check the stopping criterion.
	\ENDFOR
	\RETURN $\bt$.
\end{algorithmic}
\label{algo:coarse}
\end{algorithm}

In practice, a decaying step size schedule and a large $\beta$ are adopted for the optimizer. The empirical convergence to a local minimum is indicated when the norm of the loss gradient with respect to the parameters falls below a given tolerance within the loop over $i$ (which serves as the stopping criterion at step 10) and then the iteration is terminated. Numerically, other off-shelf optimizers, e.g., Adam and L-BFGS, can yield similar results.

Next we compute $P_i(\lambda,\bt)$, and request that the OCP is strictly accurate of order $q$:
\begin{equation}\label{eq:accurate}
	\sum_{i=1}^m C_i^j P_i(\lambda,\bt) - \frac{j!}{(-\lambda)^{j+1}}\Big( R (\lambda,\bt)
		- \sum_{\ell=0}^j \frac{(\lambda)^\ell}{\ell!}\Big) =0,\quad \forall  0\le j\le q-1.
\end{equation}
We employ a uniform distribution for the values of $C_i^j$ within the interval $[0, 1]$, so as to solve
$P_i(\lambda,\bt)$ via linear systems. When $i=1$, we take $C_{i}^j=1$ in order to have the better stability of a backward scheme. Note that the functions $P_i$ do not play a role in the convergence analysis of OCP or the convergence factor $\kappa_c$ for linear evolution problems. For general nonlinear problems, one may also include \eqref{eq:accurate} as a side constraint in the optimization formulation in order to further enhance the performance, which however will not be pursued below. Finally, in view of the standard CP update \eqref{eqn:coarse-integrator}, $R ( \lambda ,\bt ) $ and $P_i ( \lambda,\bt  ) $ together define the OCP:
\begin{equation*}
	\G(T_n,\Delta T,v,f) = R({\Delta T} A,\bt) v + {\Delta T} \sum_{i=1}^{M_{1}} P_i({\Delta T} A,\bt)f(T_n+C_i\Delta T).
\end{equation*}
In practice, we take two, three, four-stage Lobatto IIIC method and three-stage Radau IIA method
\cite{wanner1996solving,HAIRER199993} as the FP $\mathcal{F}$, and give the learned $R(\lambda , \bt)$ in Appendix \ref{app:coarse}.
\begin{proposition}
For any optimal choice $\bt^*$ of the parameter $\bt$, the OCP is consistent and has an accuracy of order $q$. Furthermore, it is strongly stable.
\end{proposition}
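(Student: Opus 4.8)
The plan is to verify the three properties \textbf{(P1)}--\textbf{(P3)} for the optimized pair $R(\lambda,\bt^*)$ and $\{P_i(\lambda,\bt^*)\}$, relying only on the hard constraints built into the optimization problem \eqref{eq:opt problem} and the barrier loss \eqref{eq:loss_2}; the optimality of $\bt^*$ enters solely through the fact that these constraints hold and that $\mathcal{L}_\rho(\bt^*)$ is finite. First I would establish the order-$q$ accuracy of $R$ itself, i.e.\ that $R(\lambda,\bt^*) = e^{-\lambda} + \mathcal{O}(\lambda^{q+1})$. Recall $E(\lambda,\bt^*) = e^{-\lambda}Q_m(\lambda,\bt^*) - P_n(\lambda,\bt^*)$. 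The consistency constraint \eqref{eq:determine a_1}, imposed as an equality constraint in \eqref{eq:opt problem}, is by the derivation preceding it exactly equivalent to $E^{(k)}(0,\bt^*)=0$ for $k=0,1,\dots,q$ (Leibniz rule on the product $e^{-\lambda}Q_m$, matching Taylor coefficients). Hence $E(\lambda,\bt^*)=\mathcal{O}(\lambda^{q+1})$ near the origin. Since $Q_m(0,\bt^*)=1\neq 0$ and $Q_m$ is a polynomial, hence smooth and nonvanishing in a neighborhood of $0$, division yields $e(\lambda,\bt^*)=E(\lambda,\bt^*)/Q_m(\lambda,\bt^*)=\mathcal{O}(\lambda^{q+1})$, which is the first part of \textbf{(P2)}; as $q\ge 1$, this already gives consistency.

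Next I would address the truncation-error condition on the stage functions $P_i$, which is the part requiring care. The $P_i(\lambda,\bt^*)$ are solved from the strict-accuracy linear system \eqref{eq:accurate}, enforcing, for $0\le j\le q-1$,
\[
\sum_{i=1}^m C_i^j P_i(\lambda,\bt^*) = \frac{j!}{(-\lambda)^{j+1}}\Big(R(\lambda,\bt^*) - \sum_{\ell=0}^j \frac{(-\lambda)^\ell}{\ell!}\Big),
\]
where the truncated exponential is read with the sign convention of \textbf{(P2)}. Subtracting the corresponding \textbf{(P2)} expression, the difference is $\frac{j!}{(-\lambda)^{j+1}}\big(R(\lambda,\bt^*)-e^{-\lambda}\big)$, and inserting the order-$q$ bound from the previous step gives $\frac{j!}{(-\lambda)^{j+1}}\,\mathcal{O}(\lambda^{q+1})=\mathcal{O}(\lambda^{q-j})$, precisely the tolerance demanded by \textbf{(P2)}. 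Thus the exact identity \eqref{eq:accurate}, combined with the accuracy of $R$, delivers the full second relation of \textbf{(P2)}.

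It remains to check strong stability \textbf{(P3)}, $|R(\infty,\bt^*)|<1$. From the explicit form \eqref{eqn:stability-parametric} with $q<n\le m$: if $n<m$ then $R(\infty,\bt^*)=0$, so the OCP is in fact L-stable; if $n=m$ then $R(\infty,\bt^*)=a_n/e^{b_m}$, and the barrier term $\log\!\big(1-(a_n/e^{b_m})^2\big)$ in \eqref{eq:loss_2} is finite only when $|a_n/e^{b_m}|<1$, so any $\bt^*$ at which $\mathcal{L}_\rho$ is finite satisfies $|R(\infty,\bt^*)|<1$. For completeness, plain stability \textbf{(P1)}, $|R(\lambda,\bt^*)|<1$ on $(0,\infty)$, follows because the extrema of the smooth bounded map $s\mapsto R(s,\bt^*)$ over $(0,\infty)$ occur either at interior critical points (the elements of $\Lambda_R$) or in the limit $s\to\infty$, and the barrier over $\Lambda_R\cup\{\infty\}$ forces $|R|<1$ at all of these.

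The Taylor and Leibniz bookkeeping in the first two steps is routine. The main obstacle is the reconciliation in the second step: condition \eqref{eq:accurate} is stated in terms of the \emph{approximate} stability function $R(\lambda,\bt^*)$ rather than $e^{-\lambda}$, and one must verify (paying attention to the sign convention in the truncated exponential) that the exact vanishing there, together with the $\mathcal{O}(\lambda^{q+1})$ control on $R-e^{-\lambda}$ from the first step, is exactly what converts \eqref{eq:accurate} into the $\mathcal{O}(\lambda^{q-j})$ estimate of \textbf{(P2)}. That bridge is the crux of the argument; everything else reduces to evaluating $R$ at $\infty$ and invoking the constraints enforced by the optimization.
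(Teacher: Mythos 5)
Your proof is correct and follows essentially the same route as the paper's: consistency and order-$q$ accuracy come from the hard constraints \eqref{eq:determine a_1} and \eqref{eq:accurate} built into the optimization problem \eqref{eq:opt problem}, while stability \textbf{(P1)} and strong stability \textbf{(P3)} come from the barrier term \eqref{eq:loss_2}. The paper states this in three lines; your write-up simply supplies the Taylor/Leibniz bookkeeping, the bridge from \eqref{eq:accurate} to the second relation in \textbf{(P2)}, and the evaluation of $R(\infty,\bt^*)$ that the paper leaves implicit.
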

\begin{proof}
Note that we learn the OCP according to conditions {\bf{(P1)-(P3)}}.
First, the OCP satisfies condition {\bf{(P2)}} and equation \eqref{eq:accurate}, so it is consistent, with at least an accuracy of order $q$.
Second, due to the presence of the penalty term $\L_{\rm b} (\bt)$ \eqref{eq:loss_2}, the OCP satisfies both conditions {\bf{(P1)}} and {\bf{(P3)}}, implying its strong stability.
\end{proof}

\begin{remark}
In exact arithmetic, the parareal algorithm with the OCP converges to the fine grid solution. In practice, the round-off error can be significant, due to the presence of the discrete operator $A_h^2$ in the numerator of the stability function $R(\lambda,\theta)$. To mitigate the stability issue, we adopt the following reformulation
\begin{equation*}
R\left( \lambda,\theta \right) =\frac{a_{0}+a_{1}\lambda +a_{2}\lambda^{2} }{1+e^{b_{1}}\lambda +e^{b_{2}}\lambda^{2} } =\frac{a_{2}}{e^{b_{2}}} +\frac{\left( a_{0}e^{b_{2}}-a_{2} \right) +(a_{1}e^{b_{2}}-a_{2}e^{b_{1}})\lambda }{(1+e^{b_{1}}\lambda +e^{b_{2}}\lambda^{2} )e^{b_{2}}}.
\end{equation*}
Numerically, it allows achieving an accuracy close to machine precision.
\end{remark}
	
\subsection{Robustness with respect to the ratio $J$}\label{subsec:sensitivity}
In Algorithm \ref{algo:coarse}, the ratio $J$ of the coarse step size $\Delta T$ to the fine one $\Delta t$ is predetermined to $J_0$ when {optimizing} the parameters. It is important to study the robustness
of the resulting algorithm with respect to the choice of $J$. The next result shows that the convergence factor remains stable in $J$.
To this end, we study the supremum of $\kappa(r,R,J,s)$ over the interval $\mathbb{R}_+:=(0,\infty)$:
$\kappa_{c}(r,R; J) \leq	\sup_{s\in \mathbb{R}_+} | \kappa ( r,R,J,s)  | $. The convexity assumption $g''(J)\geq0$ on $g(J)=r(s/J)^J$ with respect to $J$ (for $s>0$) holds for two and four-stage Lobatto IIIC methods, and three-stage Radau IIA method. In practice, the ratio $J$ is important: a too small $J$ leads to big and non-negligible cost, whereas a too large $J$ requires more outer loops. For $J$ values smaller than 16, numerically one observes that the convergence factor $\kappa(r,R,J,s)$ is also small; see Table \ref{tab:compare} for concrete numbers. Hence, we have chosen $J \in [ 16,128 ]$ in the discussion below. 
\begin{theorem}\label{thm:sensitivity of J}
Let $J_{1},J_{2}\in [ 16,128]$ such that $\  J_{1}\leq J_{2}$. Let $r(s)$ be the stability function of the FP, and satisfy the following three conditions:
{\rm(i)} conditions {\bf{(P1)-(P3)}}; {\rm(ii)}  $g''( J)  >0$, with $g(J)=r( s/J)^{J}$  for any given $s\in \mathbb{R}_+$; and {\rm(iii)} $r( s)  \geq 0$, for any $s\in \mathbb{R}_+$.
Let $R(s,\bt)$ be the stability function of the OCP. Then there holds
\begin{equation*}
\sup_{s\in \mathbb{R}_+} | \kappa ( r,R,J_{2},s)|- \sup_{s\in\mathbb{R}_+} | \kappa ( r,R,J_{1},s)|\leq 112\sup_{s\in \mathbb{R}_+}|h(s)|,
\end{equation*}
with the function $h(s)$ given by
\begin{equation}\label{eqn:h}
  h(s) =\frac{r( s)^{15}}{1-|R( 16s,\bt)|} \cdot(r( s)\ln  r(s)- r^{\prime }( s)s ).
\end{equation}
\end{theorem}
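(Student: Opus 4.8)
The plan is to reduce the difference of suprema to a single supremum of a difference, observe that the $R(s)$ contribution cancels, and then control the remaining $J$-increment of $g(J):=r(s/J)^J$ via the fundamental theorem of calculus, exploiting the convexity hypothesis (ii) to collapse the estimate onto the endpoint $J=16$.

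First I would combine the subadditivity of the supremum with the reverse triangle inequality to write
\[
\sup_{s\in\mathbb{R}_+}|\kappa(r,R,J_2,s)|-\sup_{s\in\mathbb{R}_+}|\kappa(r,R,J_1,s)|\le \sup_{s\in\mathbb{R}_+}\bigl(|\kappa(r,R,J_2,s)|-|\kappa(r,R,J_1,s)|\bigr)\le \sup_{s\in\mathbb{R}_+}|\kappa(r,R,J_2,s)-\kappa(r,R,J_1,s)|.
\]
Since $\kappa(r,R,J,s)=(r(s/J)^J-R(s))/(1-|R(s)|)$ and the $R(s)$ term does not depend on $J$, it cancels in the difference, leaving
\[
\kappa(r,R,J_2,s)-\kappa(r,R,J_1,s)=\frac{g(J_2)-g(J_1)}{1-|R(s)|},\qquad g(J):=r(s/J)^J,
\]
so that the entire problem is reduced to estimating the increment of $g$ in $J$, weighted by $1/(1-|R(s)|)$.

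Next, for fixed $s$ I would apply the fundamental theorem of calculus, $g(J_2)-g(J_1)=\int_{J_1}^{J_2}g'(J)\,\d J$, and bound its magnitude by $(J_2-J_1)\sup_{J\in[J_1,J_2]}|g'(J)|\le 112\,\sup_{J\in[16,128]}|g'(J)|$, using $J_2-J_1\le 128-16=112$. A direct computation, writing $g(J)=\exp(J\ln r(s/J))$, gives
\[
g'(J)=r(s/J)^{J-1}\bigl(r(s/J)\ln r(s/J)-(s/J)\,r'(s/J)\bigr),
\]
whose bracketed factor is exactly the one appearing in $h$. Assumption (iii), $r(s)\ge 0$, makes $\ln r$ well defined, and $0<r(s)<1$ from (i) keeps the base of the power in $(0,1)$. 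The decisive point is to localize $\sup_{J\in[16,128]}|g'(J)|$ at the left endpoint $J=16$: here I would invoke the convexity assumption (ii), $g''(J)>0$, which forces $g'$ to be monotone (increasing) in $J$; together with the fact that $g$ is non-increasing in $J$ on $[16,128]$ (so $g'\le 0$), the map $J\mapsto|g'(J)|$ is non-increasing, whence its supremum over any $[J_1,J_2]\subseteq[16,128]$ is attained at $J_1$ and is bounded by $|g'(16)|$. Substituting $\sigma=s/16$ then identifies
\[
\frac{|g'(16)|}{1-|R(s)|}=\frac{r(\sigma)^{15}\,\bigl|r(\sigma)\ln r(\sigma)-\sigma r'(\sigma)\bigr|}{1-|R(16\sigma)|}=|h(\sigma)|,
\]
and taking the supremum over $s$ (equivalently over $\sigma$) produces the asserted factor $112\sup_{s}|h(s)|$.

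The main obstacle is precisely this last localization: proving that $|g'(\cdot)|$ is maximized at $J=16$, which rests on controlling the sign of $g'$ (equivalently, the monotonicity of $J\mapsto r(s/J)^J$) on $[16,128]$ in tandem with the convexity of $g$. This is where conditions (i)--(iii) are indispensable, and it is the step that must be verified scheme-by-scheme for the fine propagators actually used (two- and four-stage Lobatto IIIC and three-stage Radau IIA); for instance, one expects $r(\sigma)\ge e^{-\sigma}$, which yields both the convexity and the monotonicity of $g$. An alternative route avoids the sign analysis by retaining $r(s/J)^{J-1}\le r(s/J)^{15}$ for the numerator (valid since $0<r<1$ and $J\ge 16$) and instead using the monotone decay of $|R|$ to pass from the denominator $1-|R(s)|$ to $1-|R(16\sigma)|$; the required decay of $|R|$ can in principle be certified from the critical-point set $\Lambda_R$ already computed in Algorithm~\ref{algo:coarse}.
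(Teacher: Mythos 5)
Your reduction (difference of suprema $\to$ supremum of the difference, cancellation of the $R$-term, mean-value/FTC bound producing the factor $112$, the formula for $g'$, and the substitution $\sigma=s/16$ identifying $|g'(16)|/(1-|R(s)|)$ with $|h(\sigma)|$) is exactly the paper's argument. However, there is a genuine gap at the step you yourself flag as the ``main obstacle'': you never prove, from the stated hypotheses, that $g'(J)\le 0$, which is indispensable for localizing $\sup_{J\in[16,128]}|g'(J)|$ at $J=16$. Convexity alone only makes $g'$ non-decreasing, so without the sign information $|g'|$ could just as well be maximized at $J=128$. You assert the monotonicity of $g$ as a ``fact,'' and in your closing paragraph you defer it to scheme-by-scheme verification via the unproven expectation $r(\sigma)\ge e^{-\sigma}$ --- which is an extra hypothesis not among (i)--(iii), and which by itself yields neither the convexity nor the monotonicity of $g$.

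The paper closes this gap from the assumptions alone, and the argument is short: by consistency (condition (P2)), $r(0)=1$ and $r'(0)=-1$, so with $f(J)=\ln g(J)=J\ln r(s/J)$ one has $f'(J)=\ln r(s/J)-\frac{s}{J}\frac{r'(s/J)}{r(s/J)}\to 0$ as $J\to\infty$, while $g(J)\to e^{-s}$ remains bounded; hence $g'(J)=f'(J)\,g(J)\to 0$. Since assumption (ii) makes $g'$ non-decreasing and $g'(\infty)=0$, it follows that $g'(J)\le 0$ for every finite $J$ --- no scheme-specific verification is needed, and the conclusion holds for any FP satisfying (i)--(iii). (Your fallback could be repaired along similar lines: a convex $g$ satisfying $g(J)\ge e^{-s}=\lim_{J\to\infty}g(J)$ is necessarily non-increasing; but the limit argument above is cleaner and stays entirely within the stated hypotheses.) With this one step supplied, your proof coincides with the paper's.
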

\begin{proof}
By definition, for any $s^*\in \mathbb{R}_+$, we have
\begin{align*}
	 &|\kappa(r, R, J_{2},s^*)| - \sup_{s\in \mathbb{R}_+} |\kappa(r, R, J_{1},s)| \\
\leq& |\kappa(r, R, J_{2},s^*)| - |\kappa(r, R, J_{1},s^*)| \leq |\kappa(r, R, J_{2},s^*) - \kappa(r, R, J_{1},s^*)|.
\end{align*}
 We claim that for any $J\in [ 16,128]$ and $s>0$
\begin{equation}\label{eq:kappa}
	\frac{\rm d}{{\rm d}J}  \kappa (r,R,J,s ) \leq 0 \quad \text{and}\quad \frac{{\rm d}^{2}}{{\rm d}J^{2}}  \kappa ( r,R,J,s ) \geq 0.
\end{equation}
Now with $g(J)$, we have
$\kappa ( r,R,J,s )  =\frac{g( J)  -R( s,\bt )  }{1-|R( s,\bt )|  }$.
Since $| R( s,\bt)|<1$ for $s>0$ (and independent of $J$), it suffices to prove $g'(J) \leq 0 $ and $ g''(J) \geq 0$ for $J\in [ 16,128]$ and $s>0$. Let
$f(J) := \ln g( J)  =J \ln r( s/J )$.
Then we have
\begin{align}\label{eq:g''}
g'(J) =f'(J)  g(J) \quad\mbox{and}\quad
	g''(J)  =\frac{f''( J)  g(J)^2  +( g'(J) )^{2}  }{g( J)}.
\end{align}
The first two derivatives of $f(J)$ in $J$ are given by
\begin{align}\label{eq:f''}
	f'( J) &=\ln r({s}/{J} ) -\frac{s}{r( s/J) } \frac{r'( s/J) }{J}\mbox{ and }
	f''( J) =\frac{r''( s/J) r( s/J) -( r'( s/J) )^{2} }{r(s/J)^{2}J^{3}} s^{2}.
\end{align}
 Clearly, $r( 0)  =1$ and $r'( 0)  = -1$, so we have $f'(\infty )  =0$.
By assumption (ii), we have  $g''(J)  \geq 0$, and $g'(\infty) = 0$. These facts imply $g'(J) \leq 0$, yielding the estimates in \eqref{eq:kappa}. Since $\kappa ( r,R,J, s_{2}) $ is twice differentiable in $J$ over the interval $[ 16,128]$, we obtain
\begin{align*}
	& |\kappa(r, R, J_{2},s^*) - \kappa(r, R, J_{1},s^*)|
	\leq \sup_{J \in [16, 128]}\Big|\frac{\rm d}{{\rm d}J} \kappa(r, R, J,s^*)\Big| (J_{2}-J_{1}) \\
	\leq & \Big|\frac{\rm d}{{\rm d}J} \kappa(r, R, 16,s^*)\Big| (128-16)
=112\sup_{s\in \mathbb{R}_+} \Big|\frac{\rm d}{{\rm d}J} \kappa(r, R, 16,16s)\Big|.
\end{align*}
Then noting the identity $h( s)=\frac{\d}{\d J} \kappa(r, R, 16,16s)=\frac{r(s)^{15} }{1-|R( 16s,\bt) | } \cdot(r(s)  \ln  r(s)-r'(s)s) $, and
taking the supremum over all $s\in\mathbb{R}_+$ complete the proof.
\end{proof}

The next result discusses the robustness of OCP for the two-stage Lobatto IIIC method.	
\begin{corollary}\label{coro:upper bound for 2 stage Lobatto}
The two-stage Lobatto IIIC method satisfies the assumptions in
Theorem \ref{thm:sensitivity of J}, and $|h(s)|\leq \text{4.91e-4}$. Further, for $J_{1},J_{2}\in [ 16,128]$ with $J_{1}\leq J_{2}$, we have
\begin{equation*}
\sup_{s_{2}\in\mathbb{R}_+} | \kappa( r,R,J_{2},s_{2})|-\sup_{s_{1}\in \mathbb{R}_+}| \kappa ( r,R,J_{1},s_{1})|  \leq 0.055.
\end{equation*}
\end{corollary}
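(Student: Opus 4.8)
The plan is to instantiate Theorem~\ref{thm:sensitivity of J} for the two-stage Lobatto IIIC method, whose stability function is the $(0,2)$-Padé approximant of $e^{-s}$, namely $r(s)=1/(1+s+s^2/2)$; writing $D(s)=1+s+s^2/2$ one has $r=1/D$ and $r'=-(1+s)/D^2$. The three hypotheses of the theorem are then checked directly. Conditions {\bf(P1)}--{\bf(P3)} hold since $D(s)>1$ gives $0<r(s)<1$ on $(0,\infty)$, since $r(\infty)=0$ gives strong stability, and since $r(s)=e^{-s}+\mathcal O(s^3)$ gives order $q=2$; hypothesis~(iii), $r(s)\ge0$, is immediate from $r=1/D>0$.

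For hypothesis~(ii) I would reduce the convexity $g''(J)>0$ to the log-convexity of $r$. By the identities \eqref{eq:g''}--\eqref{eq:f''} used in the proof of Theorem~\ref{thm:sensitivity of J}, and using $g(J)=r(s/J)^J>0$ together with $(g')^2\ge0$, it suffices to show $f''(J)>0$, which in turn is governed by the sign of $r''r-(r')^2=r^2(\ln r)''$. Thus I only need $(\ln r)''>0$ on $(0,\infty)$. Since $\ln r=-\ln D$, a one-line computation gives $(\ln D)''=\big(D''D-(D')^2\big)/D^2=-s(1+s/2)/D^2<0$ (using $D'=1+s$, $D''=1$), hence $(\ln r)''=s(1+s/2)/D^2>0$ for all $s>0$. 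This establishes $g''(J)>0$ for every $s>0$ and $J>0$, so all assumptions of Theorem~\ref{thm:sensitivity of J} are met.

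The main work is the bound $\sup_{s>0}|h(s)|\le 4.91\times10^{-4}$ for $h$ in \eqref{eqn:h}. I would first make the explicit factor transparent: with $r=1/D$ one has $r\ln r-r's=\big((1+s)s-D\ln D\big)/D^2$, so $r^{15}(r\ln r-r's)=\Phi(s)/D^{17}$ with $\Phi(s):=(1+s)s-D\ln D$. A Taylor expansion gives $\Phi(s)=-\tfrac13 s^3+\mathcal O(s^4)$, so the numerator vanishes to third order at $s=0$, while the denominator $1-|R(16s,\bt)|=16s+\mathcal O(s^2)$ vanishes only to first order (because $R(\lambda,\bt)=e^{-\lambda}+\mathcal O(\lambda^{q+1})$ with $q\ge1$); hence $h$ extends continuously to $s=0$ with $h(0)=0$. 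At the other end, $r(s)^{15}$ decays like $s^{-30}$ and $1-|R(\infty,\bt)|$ is a fixed positive number (strong stability of the OCP), so $h(s)\to0$ as $s\to\infty$. Consequently $|h|$ attains its maximum at an interior point, and the problem reduces to maximizing an explicit continuous function over a bounded $s$-window, outside of which the rapidly decaying factor $r^{15}$ renders $h$ negligible.

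The one ingredient that is not fully explicit is the OCP stability function $R(\cdot,\bt)$, which enters through $1/(1-|R(16s,\bt)|)$; this is the crux of the estimate. Here I would use the concrete learned rational $R(\lambda,\bt)$ recorded in Appendix~\ref{app:coarse}, for which the barrier construction guarantees $|R(\lambda,\bt)|<1$ on $(0,\infty)$ and $|R(\infty,\bt)|<1$, giving a strictly positive lower bound for $1-|R(16s,\bt)|$ away from $0$; combined with the cancellation near $0$ noted above, this yields a rigorous finite-window enclosure of $\sup_{s>0}|h(s)|$, whose value is $\le 4.91\times10^{-4}$ (the supremum being attained at an interior $s^*$). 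Substituting into Theorem~\ref{thm:sensitivity of J} gives $\sup_{s_2}|\kappa(r,R,J_2,s_2)|-\sup_{s_1}|\kappa(r,R,J_1,s_1)|\le 112\cdot 4.91\times10^{-4}\le 0.055$, as claimed. I expect the degeneracy of $1-|R(16s,\bt)|$ at $s=0$ to be the main obstacle, resolved by the matched orders of vanishing ($s^3$ versus $s$) of numerator and denominator.
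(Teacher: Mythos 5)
Your overall architecture matches the paper's proof: identify $r(s)=2/(s^2+2s+2)$, verify hypotheses (i)--(iii) of Theorem \ref{thm:sensitivity of J}, bound $\sup_{s>0}|h(s)|$, and multiply by $112$. Your verification of hypothesis (ii) is correct and is in fact the same reduction the paper uses: the paper also reduces $g''(J)\geq 0$ to the log-convexity inequality $r''r\geq (r')^2$ (its inequality \eqref{eqn:convex}), which you establish cleanly via $(\ln D)''=-s(1+s/2)/D^2<0$. Your structural observations about $h$ --- the factorization $r^{15}(r\ln r-r's)=\Phi(s)/D^{17}$ with $\Phi(s)=-\tfrac13 s^3+\mathcal{O}(s^4)$, the first-order vanishing of $1-|R(16s,\bt)|$ at $s=0$ coming from consistency of the OCP, hence $h(0^+)=0$, together with $h(s)\to 0$ as $s\to\infty$ --- are all correct, and they correctly defuse what you identify as the main obstacle (the degenerate denominator at $s=0$).

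However, the central quantitative claim, $\sup_{s>0}|h(s)|\leq 4.91\cdot 10^{-4}$, is never actually derived in your proposal: you assert that a ``rigorous finite-window enclosure'' yields the constant, without specifying the window, the tail estimates that confine the supremum to that window, or the mechanism (critical-point analysis or interval arithmetic) that bounds $h$ inside it. This is precisely where the paper's proof spends its effort. The paper proceeds in three steps: (i) $h\leq 0$ on $(0,\infty)$, via monotonicity of $\varphi(s)=\ln r(s)-r'(s)s/r(s)$, which follows from the same log-convexity inequality; (ii) on $[1/2,\infty)$, a critical-point analysis of $R(16s,\bt)$ gives $|R(16s,\bt)|\leq 1.4\cdot 10^{-2}$, whence $h(s)\geq 1.015\, r(s)^{16}\ln r(s)=:\psi(s)$ with $\psi$ increasing there, so $h\geq \psi(1/2)>-2.1\cdot 10^{-4}$; (iii) on $(0,1/2]$, writing $h=\xi(s)\cdot\bigl(r^{16}\eta(s)-r^{15}r'\bigr)$ with $\xi(s)=s/(1-|R(16s,\bt)|)$ and $\eta(s)=\ln r(s)/s$ both monotone, the problem reduces to the maximum of $\phi(s)=r^{16}+r^{15}r'$ at its unique critical point, giving $h\geq 0.506\cdot(-9.5\cdot 10^{-4})\geq -4.91\cdot 10^{-4}$. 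In other words, the supremum is pinned down by finitely many explicit evaluations of one-variable functions, each justified by a sign or monotonicity argument. To complete your plan you would need to supply an argument of this kind, or genuinely carry out a certified enclosure with explicit near-zero and tail bounds; as written, the constant $4.91\cdot 10^{-4}$ --- and hence the final bound $0.055$ --- is asserted rather than proved.
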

\begin{proof}
For the two-stage Lobatto IIIC method, we have $r(s)  =\frac{2}{s^{2}+2s+2}$.
We claim that $r(s) $ satisfies the assumptions in Theorem \ref{thm:sensitivity of J}. Indeed, Assumptions (i)
and (iii) hold trivially. To see Assumption (ii), we write $r(s)  ={P_{r}(s)}/{Q_{r}(s)}$ and
employ equations \eqref{eq:g''} and \eqref{eq:f''}. We claim the following inequality
\begin{equation}\label{eqn:convex}
    r''(s) r(s) \geq (r'(s))^2,\quad\forall s\in \mathbb{R}_+,
\end{equation}
or equivalently,
\begin{equation*}
	\frac{P_{r}( s)  P''_{r}( s)  -(P'_{r}( s)  )^{2}}{P_{r}( s)^{2}  } \geq \frac{Q_{r}( s)  Q''_{r}( s)  -(Q'_{r}( s)  )^{2}}{Q_{r}( s)^{2}  } ,\quad  \forall s\in \mathbb{R}_+,
\end{equation*}
which can be verified directly using the explicit form of $r(s)$. For the two-stage Lobatto IIIC method, the stability function $R(s,\bt)$ of the OCP is given by (cf. Appendix \ref{app:coarse})
\begin{equation*}
R(s,\bt) = \frac{{1.0 - 0.20967s + 0.00484s^2}}{{1.0 + 0.79033s + 0.37931s^2}} ,
\end{equation*}
For the function $h$ defined in \eqref{eqn:h}, we claim $|h(s)| \leq {4.91\cdot10^{-4}}$ for $s>0$. We divide the proof of the desired assertion into the following three steps: (i) $h(s)\leq 0$ for $s>0$; (ii) bound $h$ for $s\in [1/2,\infty)$,  i.e., $h(s) >{-2.1\cdot10^{-4}}$ for $s\in [ 1/2 ,\infty ) $; and (iii) bound $h$ for $s\in (0,1/2)$, i.e., $h(s)\geq 0.506\cdot ({-9.5\cdot10^{-4}})  \geq{-4.91\cdot10^{-4}}$.\\
\textbf{Step (i)}: Let
$\varphi(s)=\ln r(s)-\frac{r'(s)s}{r(s)}$. From the inequality \eqref{eqn:convex}, we deduce
$\varphi'( s)  =\frac{s((r'(s))^{2}  -r''(s) r(s))}{r^{2}(s)} \leq 0$. Then we derive $\varphi ( s)  \leq \varphi (0^+)  =0$, which implies $h( s)  \leq h( 0^+)  \leq 0$.\\
\textbf{Step (ii)}: For $s\in [1/2,\infty )$, we have
\begin{equation*}
    \frac{\rm d }{{\rm d} s} R(s,\bt) \approx \frac{1.0-0.74894s+0.083355s^{2}}{(1.0+0.79033s+0.37931s^{2})^2} .
\end{equation*}
Thus $R(16s,\bt)$ has a unique extremum at $\bar s \approx 0.6353$, and it is decreasing on $[1/2,\bar s]$ and increasing on $(\bar s,\infty)$. Hence, $|R(16s,\bt)|\leq \max\{ |R(16\bar s,\bt)|,|R(\infty,\bt)|\}\leq {1.4\cdot10^{-2}}$, and
\begin{equation*}
	h( s)  \geq \frac{r(s)^{16}\ln r( s)  }{1-|R( 16s,\bt )|  } \geq 1.015 \cdot r(s)^{16}\ln r( s) := \psi(s).
\end{equation*}
Clearly, {$\psi'(s)=1.015r(s)^{15} r'(s)(16\ln  r(s)+{1})$}, and also  $r>0$ and $r'<0$, from which it follows that $\psi'(s)>0$ for $s>1/2$. Hence, we deduce
$h( s)  \geq \psi ( s)  \geq \psi(1/2)  >{-2.1\cdot10^{-4}}$.\\
\textbf{Step (iii)}: For $s\in (0,1/2]$, note that there exists a unique root $s^{\ast}\approx 0.341$ of $R(16s^{\ast},\bt)$. Let $\xi ( s)  =\frac{s}{1-|R( 16s,\bt ) | }$ and $\eta ( s)  =\frac{\ln  r( s)    }{s}$. Then
\begin{align*}
	\xi'(s)&=\begin{cases}\frac{1-R(16s,\bt)+16R'(16s,\bt) s}{(1-R(16s,\bt))^{2}} , &0<s<s^{\ast }_{1},\\
    \frac{1+R( 16s,\bt)  -16R'(16s,\bt)s}{(1+R(16s,\bt))^{2}} ,&s^{\ast }_{1}\leq s<0.5,
    \end{cases} ~\mbox{and}~
	\eta'(s) =\frac{\frac{r'(s)s}{r(s)}-\ln r(s)}{s^{2}} =-\frac{\varphi(s)}{s^{2}} .
\end{align*}
Hence, $\eta ( s)\leq 0$ and $ \xi'( s)  ,\eta' ( s)  \geq 0$ when $s\in ( 0,1/2) $.  Then $h(s)$ can be bounded as
\begin{align*}
	h( s)  &=\frac{s}{1-|R( 16s,\bt)|  } \cdot \Big( r(s)^{16}\frac{\ln  r( s)}{s} -r(s)^{15}r'(s)\Big) \\
	&=\xi( s)  \cdot ( r( s)^{16}  \eta (s)-r(s)^{15}r'( s)  ) \\
	& \geq \xi(1/2)  \cdot ( r(s) ^{16} \eta ( 0^+ )  -r(s)^{15}r'( s)  )  \\
	&\geq 0.506( -r( s)^{16}-r( s)^{15}r'(s)  ),
\end{align*}
Further, let $\phi ( s)  =r( s)^{16}  +r( s)^{15} r'( s)$. Since
$\phi'( s)  = r(s)^{14}(16r( s)r'( s)  + 15(r'(s))^{2}+r''( s)r(s))$, there is a unique root $\bar s^{\ast}$ of $\phi'( s) $ when $s\in (0,1/2) $, and that $\phi(\bar s^{\ast })\leq {9.5\cdot10^{-4}}$, i.e., $h(s)\geq 0.506\cdot({-9.5\cdot10^{-4}})  \geq{-4.91\cdot10^{-4}}$.\\
These three steps and Theorem \ref{thm:sensitivity of J} yield
$\sup_{s\in\mathbb{R}_+} | \kappa ( r,R,J_{2},s)  |  -\sup_{s\in \mathbb{R}_+ } | \kappa ( r,R,J_{1},s)  |  \leq 0.055$.
This completes the proof of the corollary.
\end{proof}
	
\begin{remark}
Similar to Corollary \ref{coro:upper bound for 2 stage Lobatto}, one can also derive upper bounds for four-stage Lobatto IIIC and three-stage Radau IIA methods. The stability function of the three-stage Lobatto IIIC method does not satisfy assumption (iii) in Theorem \ref{thm:sensitivity of J}.
\begin{figure}[htbp]
	\centering
 \setlength{\tabcolsep}{2pt}
\begin{tabular}{cc}
    \includegraphics[width=.46\textwidth,trim={2cm 3cm 2cm 3cm},clip]{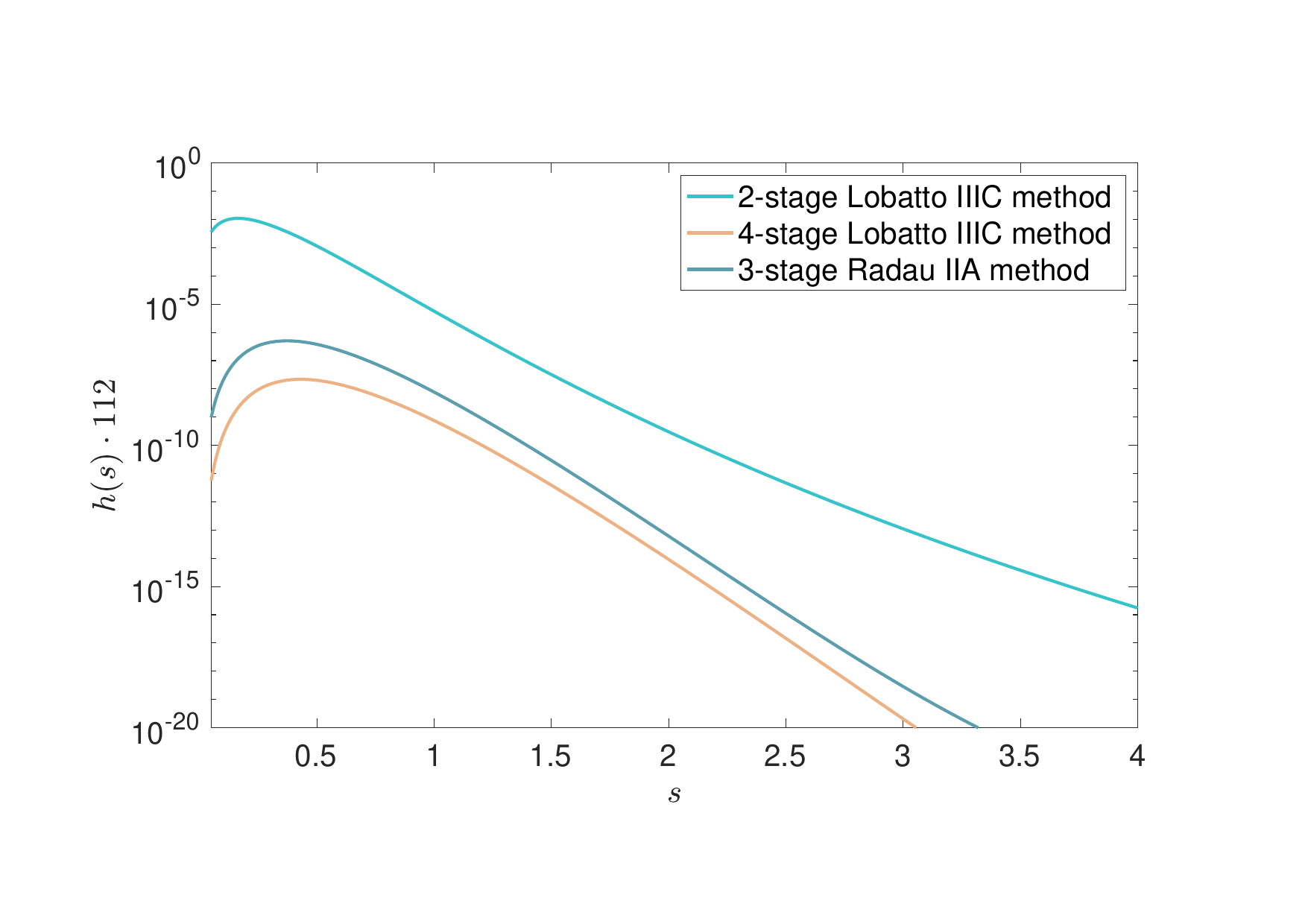}     &  \includegraphics[width=.49\linewidth,trim={2cm 3cm 2cm 3cm},clip]{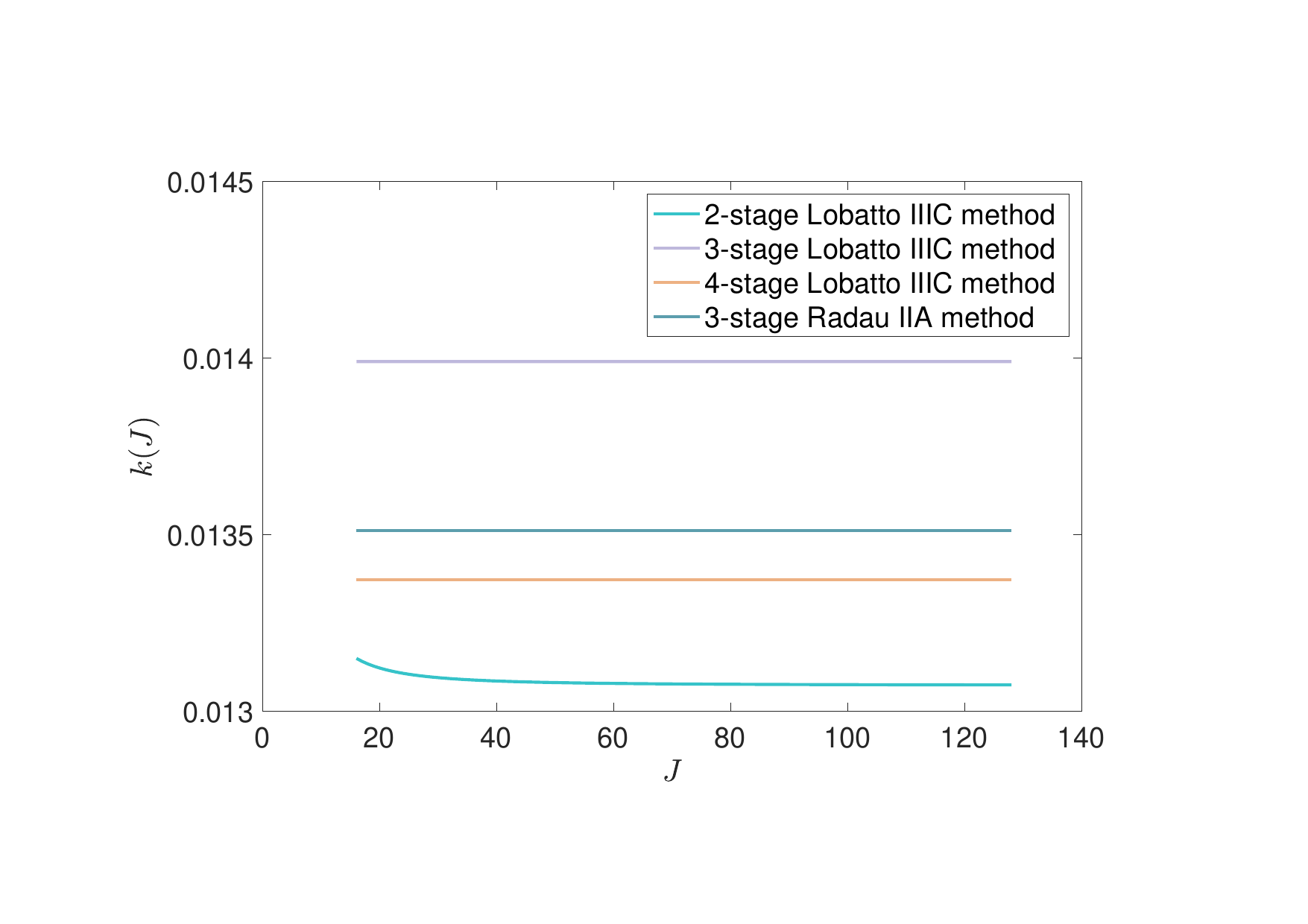}\\
     \begin{minipage}[c]{0.48\textwidth}
		\centering
		\begin{tabular}{cc}
			\toprule
			FP &   ${\sup\limits_{s \in\mathbb{R}_+} |h(s)|}\cdot 112$
 \\
			\midrule
			2-stage Lobatto IIIC& {$1.10\cdot10^{-2}$} \\
			4-stage Lobatto IIIC& {$2.20\cdot10^{-8}$}\\
			3-stage Radau IIA   & {$5.20\cdot10^{-7}$}\\
			\bottomrule
		\end{tabular}
	\end{minipage}    & \begin{minipage}[c]{0.48\textwidth}
		\centering
		\begin{tabular}{cc}
			\toprule
			FP & ${\sup\limits_{J\in [16,128]} k(J)}$  \\
			\midrule
			2-stage Lobatto IIIC& {$1.35\cdot10^{-2}$} \\
			3-stage Lobatto IIIC& {$1.40\cdot10^{-2}$} \\
			4-stage Lobatto IIIC& {$1.35\cdot10^{-2}$} \\
			3-stage Radau IIA   & {$1.36\cdot10^{-2}$} \\
			\bottomrule
		\end{tabular}
	\end{minipage}
\\
  (a) upper bound $\cdot 112$ & (b) uniform bound
\end{tabular}
 	\begin{minipage}[c]{0.48\textwidth}
		\centering
		
	\end{minipage}
	\hfill
	\caption{Numerical bounds for the propagators, and convergence rate versus $J$.}
	\label{fig:lower bound}
\end{figure}

\end{remark}

{Next we numerically illustrate Corollary \ref{coro:upper bound for 2 stage Lobatto} and Theorem \ref{thm:sensitivity of J} in Fig. \ref{fig:lower bound}. Fig. \ref{fig:lower bound}(a) plots $112\cdot h(s)$ (with $h(s)$ defined in \eqref{eqn:h}) for the stability functions of two- and four-stage Lobatto IIC method and three-stage Radau IIA method, whereas the table below reports the bound $112\cdot \sup_{s\in\mathbb{R}_+}|h(s)|$, which represent an upper bound on the sensitivity. For $ s \in [4, \infty) $, the value of $ |h(s)| \cdot 112 $ is always less than $1.0 \cdot 10^{-15} $, and thus not shown in the figure. The results suggest that the upper bound in Corollary \ref{coro:upper bound for 2 stage Lobatto} tends to be slightly conservative. In Theorem \ref{thm:sensitivity of J}, we use the factor $h(s)$ to bound the difference $k(J_2)-k(J_1)$, with $k(J)= \sup_{s \in\mathbb{R}_+} | \kappa (r,R,J,s)|$. In Fig. \ref{fig:lower bound}(b), we plot $k(J)$ for four methods, i.e., two, three, four-stage Lobatto IIIC and three-stage Radau IIA. (Note that $k(J)$ is only defined for the three-stage Lobatto IIIC method for $J\in\mathbb{N}$). The figure indicates that the variation of $k(J)$ with respect $J$ is very small for all the methods, clearly indicating the robustness of the convergence factor $k(J)$ in $J$. The uniform bound $\sup_{J\in [16,128]} k(J)$ is consistently small (at a level $1.40\times10^{-2}$) for different FPs.}


\section{Numerical experiments and discussions}\label{sec:experiment}
Now we present numerical examples to complement the theoretical analysis. We compare OCPs with several commonly employed CPs in terms of optimal convergence factors, and show the parareal
efficiency on three model problems. {Now we recall the concept of parareal efficiency, motivated by the pararell efficiency for an adaptive parareal algorithm in \cite{maday2020adaptive}, and pararell efficiency is a standard concept in high performance computing.} Since OCPs have higher computational complexity than standard ones, e.g., backward Euler scheme, a nuanced evaluation via parareal efficiency is useful. The speed-up, comparing a sequential FP to the parareal algorithm, is defined by
\begin{equation*}
	\textbf{speed-up}_{\text{para/seq}}( \eta ,[ 0,T] ) = {\textbf{cost}_{\text{seq}}( \eta ,[ 0,T] )}/{\textbf{cost}_{\text{para}}( \eta ,[ 0,T] )}\red{,}
\end{equation*}
{where the cost refers to the computing time for the algorithm} 
to achieve a target accuracy $\eta$ for $u^{N_cJ}$, i.e., $\Vert u^{N_cJ}  -\overline{u}^{N_cJ} \Vert  \leq \eta$,
with $\overline{u}^{N_cJ}$ denoting the value of the exact propagator at $t=T$. In Tables \ref{tab:linear: para eff} and \ref{tab:semi linear: para eff} below, we conduct tests both with / without the cost of $\G$ (i.e., the computing time of sequential CPs $\G$).	Then the parareal efficiency is defined as the ratio of the speed-up to the number $N_c$ of processors, i.e.,
\begin{equation*}
	\textbf{eff}_{\text{para/seq}}( \eta ,[ 0,T] ) = \textbf{speed-up}_{\text{para/seq}}( \eta ,[ 0,T] )/{N_c}.
\end{equation*}
 
{For each parareal iteration, we record the time cost of sequential CPs and FPs, excluding the setup cost. The total time cost for a single parareal iteration is the sum of the time cost of sequential CPs and the maximum time cost of FPs across all coarse intervals. Finally, the overall computational cost is obtained by summing the cost across all iterations.} We have ignored the communication cost {and there is no actual parallel implementation in the numerical experiments.}
The code for reproducing the numerical experiments is available at
\url{https://github.com/Qingle-hello/OCPs.git}.

\subsection{Comparative study on convergence factor} We compare OCP with backward Euler scheme (BE), and the two-stage, second-order singly diagonally implicit Runge Kutta scheme (SDIRK-22) in terms of the mesh-independent convergence factor.
The Butcher tableaux and stability function of SDIRK-22 are given respectively by
\noindent
\begin{center}
\begin{minipage}{.49\textwidth}
\centering
\begin{tabular}{c | c c }
    $\gamma$ & $\gamma$ & 0 \\
    1 & $(1-\gamma)$ & $\gamma$ \\
    \hline
    & $(1-\gamma)$ & $\gamma$\\
\end{tabular}
\end{minipage}%
\hspace{-25mm}
\begin{minipage}{.49\textwidth}
\centering
$\text{and}$\qquad $ R(s) = \dfrac{(2\gamma - 1) s + 1}{(\gamma s + 1)^{2}},$
\end{minipage}
\end{center}
with $\gamma = \frac{2-\sqrt{2}}{2}$.
We take high-order Lobatto IIIC and Radau IIA as the FPs, and learn the OCP using Algorithm \ref{algo:coarse} with a ratio $J_0 = 16$.

Let $\phi (s) = \kappa (r,R,J,s)$, which varies with the choices of $r$, $R$ and $J$, and take $\phi^\ast=\max_{s\in [0,\infty)} \phi (s)$ as the {optimized} convergence factor (with $s^\ast$ being the maximizer). {The wide range of $J$ allows examining the sensitivity of the optimized convergence factor with respect to $J$, providing further empirical evidences on the restriction $J\in [16,128]$.}
In Table \ref{tab:compare}, we present $\phi^\ast$ for different combinations of CPs and FPs: OCPs significantly outperform both BE scheme and SDIRK-22, as indicated by the much smaller convergence factor $\phi^\ast$, whereas BE and SDIRK-22 are largely comparable. The improvement is also shown in Fig. \ref{fig:app_2_a} for $J\in\{2,4\}$. The figure exhibits pronounced differences in the local maximizer $s^\ast$ for BE and SDIRK-22, contrasting sharply with that for OCPs. See also Figs. \ref{fig:app_2_b} and \ref{fig:app_2_c} in the appendix for additional plots of $\phi(s)$ with larger $J$ values. Overall, the observation is very similar. Moreover, the extremum value is about 0.014 for all the plots, indicating the robustness of the OCP with respect to the choice of $J$. Furthermore, with the increase of $J$, the variations in $\phi^\ast$ and the maximizer $s^\ast$ diminish, which agrees with the observation from Fig. \ref{fig:lower bound}. In addition, the sharp kinks on the plots (with value close to zero) indicate that any fixed ratio $J$, for certain frequencies $s$ (i.e., for ODEs), the OCP may exhibit a super-fast convergence behavior. However, such kinks are very narrow, and not robust with respect to the problem parameter settings.

\begin{table}[hbt!]
	\centering
 \begin{threeparttable}
  \caption{The maximum $\phi^\ast$ (optimized convergence factor) and maximizer $s^*$ of the function $\phi(s)=\kappa(r, R, J, s)$ for three CPs and four FPs.}
	\label{tab:compare}
	\begin{tabular}{c | c c c c c c c }
		\toprule
	{CP}	&{FP} & Stage & Order & Pad\'e approx. &   & \\
		\midrule
	 	&{Lobatto IIIC }& 2 & 2 & $(0,2)$ &   &  \\
		
		& $J$ &  2 &  4 &  16 &  64 &  128 \\

    	BE&$\phi^\ast$ & 0.264 & 0.287 &  0.298 &  0.298 &  0.298 \\
		&$s^\ast$ & 1.65    &  1.74    &  1.79  & 1.79   &  1.79   \\
		\hline

  SDIRK-22&$\phi^\ast$ & 0.269 & 0.263 &  0.262 &  0.262 &  0.262 \\
		&$s^\ast $ & 7.65  &  8.01  &  8.16  & 8.17   & 8.17   \\
		\hline

		OCP&$\phi^\ast$ & \bf{0.020} & \bf{0.014} &  \bf{0.013} &  \bf{0.014} &  \bf{0.014} \\
		&$s^\ast $ & 0.73    &  0.44    &  10.02  & 2.37   &  2.37   \\
		\midrule
& Lobatto IIIC & 3 & 4 & $(1,3)$ &   &  \\
		& $J$ &  2 &  4 &  16 &  64 &  128 \\
		BE &$\phi^\ast$ &  0.299 &  0.299 &  0.298 &  0.298 &  0.298 \\
		&$s^\ast$ &  1.80    & 1.79   & 1.79 & 1.79  & 1.79    \\
		\hline

  SDIRK-22&$\phi^\ast$ &  0.261 &  0.261 &  0.262 &  0.262 &  0.262 \\
		&$s^\ast$ &  8.26   & 8.18  & 8.17  & 8.17  &  8.17 \\
		\hline

  OCP&$\phi^\ast$ &  \bf{0.014} &  \bf{0.014} &  \bf{0.014} &  \bf{0.014} &  \bf{0.014} \\
		&$s^\ast $ &  2.45    & 0.39     & 0.39     & 0.39     &  0.39    \\
		\midrule
		& Lobatto IIIC& 4 & 6 & $(2,4)$ &   & \\
		& $J $&  2 &  4 &  16 &  64 &  128 \\
		BE&$\phi^\ast$ &  0.298 &  0.298 &  0.298 &  0.298 &  0.298 \\
		&$s^\ast$ & 1.79    & 1.79    &1.79   & 1.79   & 1.79 \\
		\hline

  		SDIRK-22&$ \phi^\ast$ &  0.262 &  0.262 &  0.262 & 0.262 &  0.262 \\
		&$s^\ast$ & 8.17    & 8.17   & 8.17   & 8.17   & 8.17   \\
		\hline

  		OCP&$\phi^\ast$ &  \bf{0.014} &  \bf{0.014} &  \bf{0.014} &  \bf{0.014} &  \bf{0.014} \\
		&$s^\ast$ & 10.01    & 10.02    & 10.02    & 10.02    & 10.02   \\
\midrule
		&{Radau IIA}& 3 & 5 & $(2,3)$ &  &  \\
		& $J $&  2 &  4 &  16 &  64 &  128 \\
		BE&$\phi^\ast$ &  0.298 &  0.298 &  0.298 &  0.298 &  0.298 \\
		&$s^\ast$ & 1.79    & 1.79    &1.79   & 1.79   & 1.79 \\
		\hline

  		SDIRK-22&$\phi^\ast$ &  0.262 &  0.262 &  0.262 & 0.262 &  0.262 \\
		&$s^\ast$ & 8.18    & 8.17   & 8.17   & 8.17   & 8.17   \\
		\hline

  		OCP&$\phi^\ast$ & \bf{0.014} &  \bf{0.014} & \bf{0.014} & \bf{0.014} &   \bf{0.014} \\
		&$s^\ast$ & 10.26   & 10.00    &  10.00    &  10.00    &  10.00   \\
\bottomrule
	\end{tabular}
\end{threeparttable}
\end{table}

\begin{figure} [hbt!]
\centering
\includegraphics[width=1\textwidth,trim={3cm 2cm 3cm 2cm},clip]{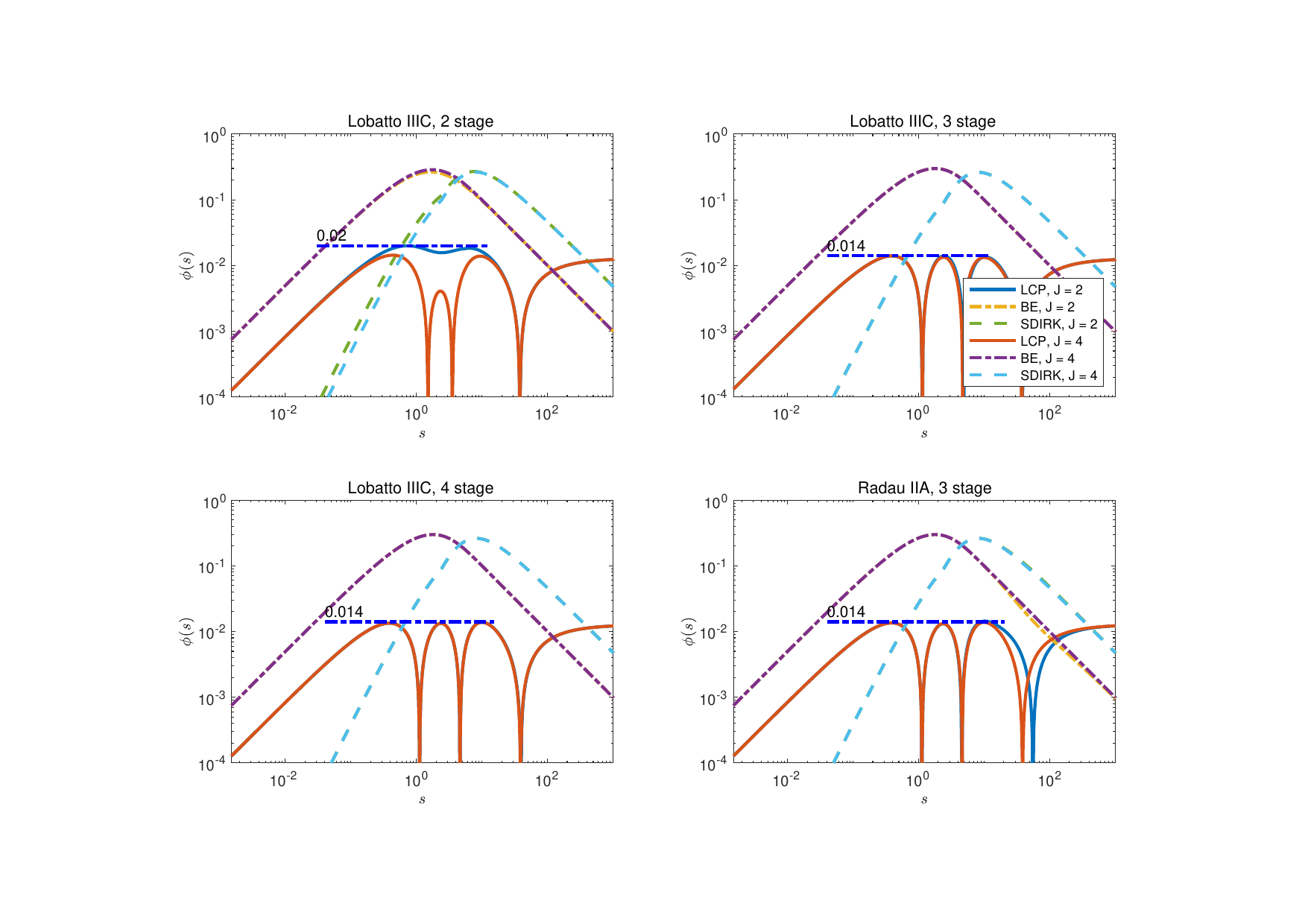}
\caption{The function $\phi (s)$ for three CPs and four FPs when $J\in\{2,4\}$.}\label{fig:app_2_a}
\end{figure}

So far all the employed FPs are L-stable. Now we optimize the CP for an A-stable FP, i.e., the $\theta$ scheme with $\theta \in (1/2,1])$. Its stability function $r(s)$ is given by
\begin{equation*}
    r(s) = \frac{1+(\theta-1)s}{1+\theta s}.
\end{equation*}
Note that $r(s)$ is monotonically decreasing on $\left( 0,\infty\right)$ and $r(\infty) = \frac{\t-1}{\t}>-1$. Using Algorithm \ref{algo:coarse}, we find that the optimized convergence factor is around $0.10$ for $\theta = 0.52,m=n=3, q=1$ and $J=16$, and the corresponding OCP is given in Appendix \ref{app:coarse}. In comparison, the convergence factor is around $0.298$ when BE is employed as CP. So the OCP still significantly outperforms that BE CP. Since $\t$ scheme is seldom used in the parareal algorithm, the numerical experiments below focus on the Gauss-Radau and Gauss-Lobatto type FPs.

\subsection{Illustration on model three problems}\label{ssec: model problems}
Now we illustrate the efficiency of OCPs on three model problems to show its potential. The first one is one-dimensional linear diffusion models.
\begin{example}\label{exam:diffusion}
 Consider the following initial-boundary value problem
\begin{equation}\label{eqn:diffusion}
	\left\{	\begin{aligned}
		\partial_t u(x,t) - \partial_{xx} u(x,t) &= f(x,t), && x \in \Omega, \, 0 < t < T, \\
		u(x,t) &= 0, && x \in \partial\Omega, \, 0 < t < T, \\
		u(x,0) &= u^0(x), && x \in \Omega,
	\end{aligned}\right.
\end{equation}
with $\Omega=(0,\pi)$, and the following three sets of problem data: {\rm(a)} $T=1$, $u^0(x) = x^{10}(x-\pi)^{10}/(\pi/2)^{10}$
and $f\equiv0$; {\rm(b)} $T=1$, $u^0(x) = \chi_{(0,\pi/2)}(x)$ and $f=\cos(t) \sin (x)$, where $\chi_{(0,\pi/2)}(x)$ denotes the characteristic function of the set $(0,\pi/2)$; {\rm(c)} $T=100$, $u^0(x) = 2\chi_{(0,\pi/2)}(x) - 1$ and $f=50\sin(2\pi (x+t))$.
\end{example}

In the experiment, we divide the domain $\Omega$ into $M$ equal subintervals, each of length $h=\pi/M$,
apply the Galerkin FEM with piesewise linear FEM with a mesh size $h=\pi/1000$, and initialize $U^{n}_{0}$ with the CP. Due to the discontinuity of the initial condition in case (b), we have chosen a fine grid for both time and space to accurately resolve the solution. Throughout, we study the error between the iterative solution $U_k^n$ by the parareal algorithm and the fine time stepping solution $u^{nJ}$, i.e.,
$\text{error}=\max_{1\leq n\leq N_{c}}\|U^{n}_{k}-u^{nJ}\|_{L^{2}(\Omega)} $.
	
The problem data in case (a) is smooth and compatible with the zero
Dirichlet boundary condition. In Fig. \ref{fig:ex1 smooth}, we show the convergence rate of
the parareal algorithm with OCPs for two, three, four-stage Lobatto IIIC
methods and three-stage Radau IIA method with $J=20$, $100$ and $\Delta t = 1/500$ (i.e., $\Delta T
=1/25$ and $1/5$), and take the BE scheme as the benchmark CP. The
convergence property of the parareal algorithm, with BE as the CP, has been studied \cite{yang2021robust}. Note that when $J=100$, the parareal algorithm should converge to the fine solution by the 5th iteration. Indeed, the errors drop to 0, confirming the theoretical prediction (we replace zero with $1.0 \cdot 10^{-15}$ in the error plot). OCPs perform much better
than BE and is also robust with respect to $J$. The convergence rate of the parareal algorithm
with the BE scheme is around 0.3 when $J=100$, as predicted by \eqref{eqn:conv-1}.

The initial data $u_0$ in case (b) is nonsmooth. From {Fig. \ref{fig:ex1 robust}}, the convergence rate of the parareal
algorithm agrees well with the theoretical predictions from Theorem \ref{thm: estimate for para}.
Note that for $J=20$, the parareal algorithm with OCPs is two steps faster than that with the BE scheme
(for the error tolerance $1.0\cdot10^{-8}$). The efficiency is accentuated for $J=100$: the learned algorithm is nine steps faster than the standard one. 	

In Table \ref{tab:linear: para eff}, we show the efficiency of the parareal algorithm for case (c), with $J=100$ and a target accuracy $\eta=1.0\cdot10^{-7}$. To achieve the target accuracy, we vary the fine step size with the method, and also exclude the two-stage Lobatto IIIC method due to its low accuracy. {We have listed the corresponding number of parareal iterations needed to achieve the given accuracy next to each FP in Table \ref{tab:linear: para eff}, indicated by the number in the bracket}. {The time $t_n$ increases with the order of the method increases, but the speed-up decreases due to a shorter time needed to reach the desired accuracy. Meanwhile, the efficiency improves and the cost of $\G$ reduces, since maintaining the ratio $J$ requires fewer processors.} The comparison between OCPs and BE indicates that despite the increase in its complexity, OCPs better suit the parareal algorithm for linear evolution problems.

\begin{figure} [hbt!]
\centering
\includegraphics[width=0.95\textwidth,trim={3cm 2cm 3cm 2cm},clip]{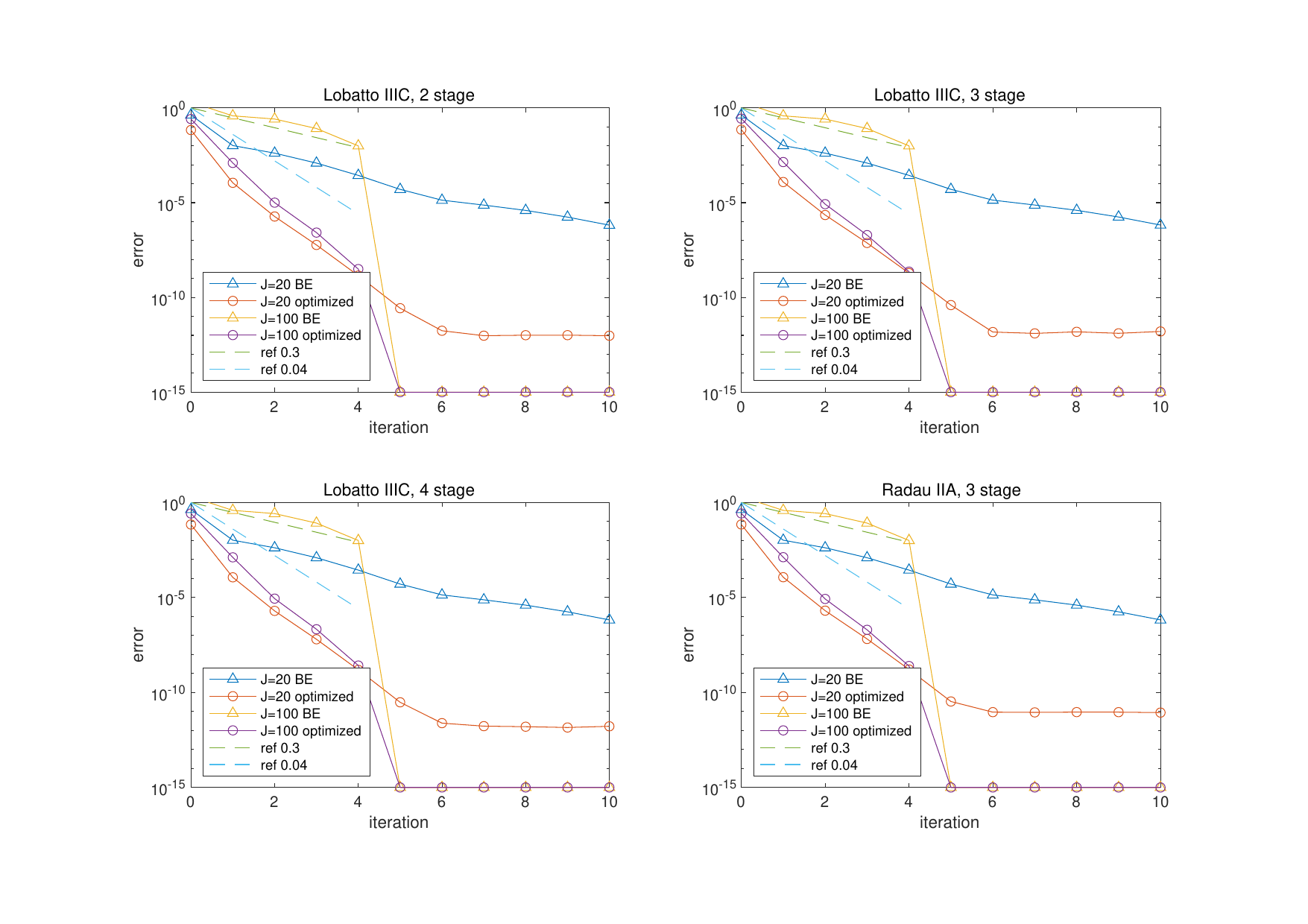}
\caption{The convergence rate for Example \ref{exam:diffusion}(a) (smooth data).
\label{fig:ex1 smooth}}
\end{figure}

\begin{figure} [hbt!]
\centering
 \includegraphics[width=0.95\textwidth,trim={3cm 2cm 3cm 2cm},clip]{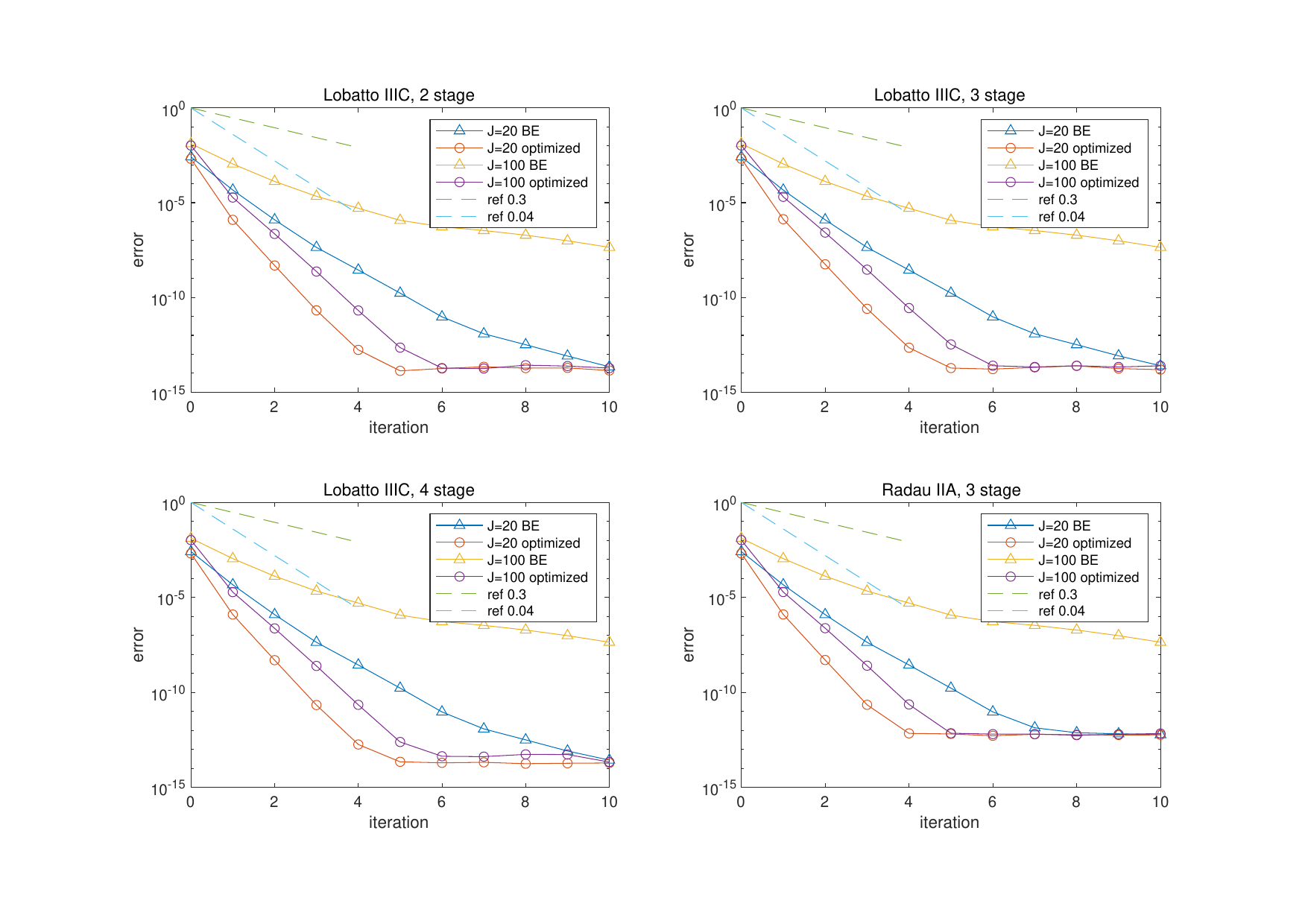}
\caption{The convergence rate for Example \ref{exam:diffusion}(b) (nonsmooth data).\label{fig:ex1 robust}}
\end{figure}

\begin{table}
\centering
\begin{threeparttable}
\caption{The parareal efficiency for Example \ref{exam:diffusion}(c) (nonsmooth data), with $J=100$ and {$\eta = 1.0\cdot10^{-7}$}.}
	\begin{tabular}{c|cccccc}
			\toprule
		& \multicolumn{2}{c}{three-stage Lobatto} & \multicolumn{2}{c}{three-stage Radau } & \multicolumn{2}{c}{four-stage Lobatto} \\ \midrule
		 $\Delta t$ & \multicolumn{2}{c}{1/150} & \multicolumn{2}{c}{1/45} & \multicolumn{2}{c}{1/30} \\ \hline
			\textbf{Speed-up}  & BE \red{(10)}& OCP \red{(4)}& BE \red{(11)}& OCP \red{(4)}& BE \red{(10)}& OCP \red{(4)}\\ \hline
			w cost $\G$ &11.48 & 25.52 & 4.81 & 11.03 & 3.93 & 10.35 \\
w/o cost $\G$  & 11.51 & 25.67 & 4.82 & 11.05 & 3.94 & 10.36 \\
\midrule
			\textbf{Efficiency}  & BE & OCP & BE & OCP & BE & OCP \\
\midrule
w  cost $\G$  & 7.65\% & 17.02\% & 10.68\% & 24.51\% & 13.09\% & 34.51\% \\
			w/o  cost $\G$  &7.68\% & 17.11\% & 10.71\% & 24.55\% & 13.10\% & 34.55\% \\
   \bottomrule
		\end{tabular}
			\label{tab:linear: para eff}
    \end{threeparttable}
\end{table}

Next we illustrate the method with a semilinear parabolic problem, the Allen--Cahn equation. It was proposed to describe the motion
of anti-phase boundaries in crystalline solids \cite{AllenCahn:1972}, where $u$ denotes the concentration of one of the two metallic components
of the alloy and $\epsilon>0$ controls the width of interface. 
\begin{example}\label{exam:AC}
Consider the initial-boundary value problem of the Allen-Cahn equation:
\begin{equation}\label{eqn:AC}
	\left\{
	\begin{aligned}
		\partial_t u - \partial_{xx} u &= \epsilon^{-2}(u - u^3), && x\in \Omega, \,t\in(0,T], \\
		\partial_x u(x,t), &= 0 && x\in \partial \Omega,\, t\in(0,T], \\
		u(x,0) &= u_0(x), && x\in \Omega,
	\end{aligned}
	\right.
\end{equation}
with $\Omega=(0,\pi)$, $T=1$ and $u_0=1-2\chi_{(\pi/2,\pi)}$.
\end{example}

In the experiment, we divide the domain $\Omega$ into 1000 equal subintervals of length $h=\pi/1000$ and apply the finite difference method  in space.
We take the CP $\mathcal{G}$ to be the first-order semi-implicit backward scheme: given $u^n$, find $u^{n+1}$ such that
\begin{equation*}
	u^{n+1}=R( \Delta T \A)  u^{n}+\Delta TP( \Delta T \A)  f( {}u^{n})  ,\quad \text{with } f(u) = \epsilon^{-2}(u - u^3).
\end{equation*}
The FP $\mathcal{F}$ is a fully implicit, high-order, single step integrator, e.g., Lobatto IIIC or fully implicit Radau schemes: given $u^n$, we find $u^{n+1}$ such that \begin{equation}\label{eqn:semi-implicit}
	\begin{cases}
       u^{ni}=u^{n-1}+\Delta t\sum^{m}_{j=1} a_{ij}( \A u^{nj}+f( u^{nj})  )  ,&\\
       u^{n}=u^{n-1}+\Delta t\sum^{m}_{i=1} b_{i}( \A u^{ni}+f( u^{ni})  )  .&
    \end{cases}
\end{equation}
Here, $u^{ni}$ denotes the interstage of the implicit single step integrator. If the time step size $\Delta t$ is small, the nonlinear system \eqref{eqn:semi-implicit} is uniquely solvable, e.g., via Newton's algorithm. Since the FP $\mathcal{F}$ is fully nonlinear, it is time consuming to numerically realize the scheme, whereas the CP $\mathcal{G}$ is a linear scheme, so the parareal algorithm may greatly improve the computational efficiency.
	
\begin{figure} [tbhp]
\centering\includegraphics[width=0.95\textwidth,trim={3cm 2cm 3cm 2cm},clip]{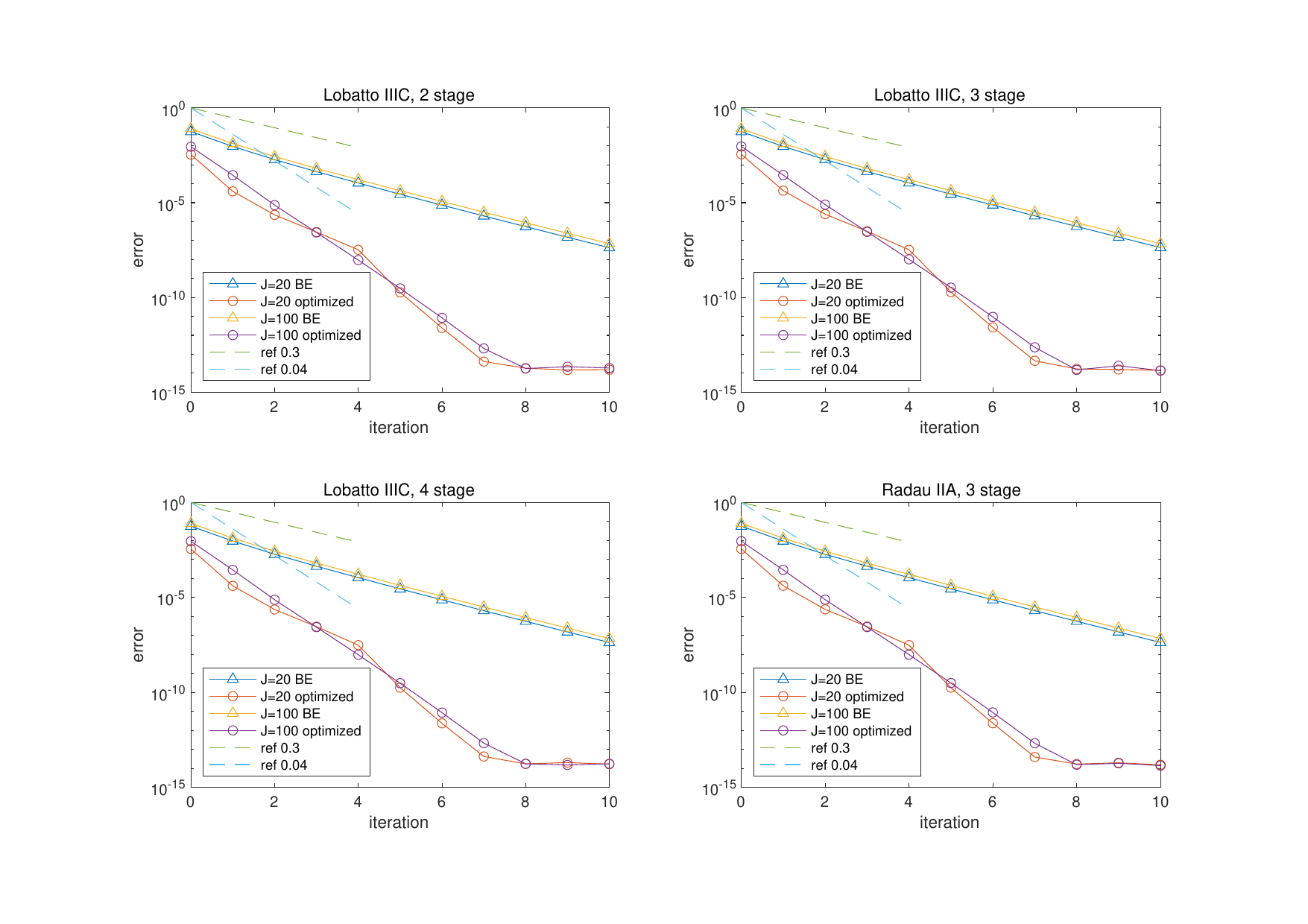}
\caption{The convergence rate for Example \ref{exam:AC} (Allen--Cahn equation) with $\epsilon^{2}=1$.}
	\label{fig:ex2_1}
\end{figure}
	
In the experiment, we take $\epsilon^{2}\in\{ 1, 0.02 \}$. In Fig. \ref{fig:ex2_1}, for $\epsilon^{2} = 1$, we illustrate OCPs with $J=20$, $100$ and $\Delta t = 1/2000$ (i.e., $\Delta T=1/100$ and $1/20$). The results show that the convergence rate of the parareal algorithm, using OCPs, is about $0.04$, compared to $0.3$ with the BE scheme. Moreover, an increase in $J$ corresponds to a larger $\Delta T$, limiting the CP's capability to
accurately capture the solution structure. Thus, the convergence rate deteriorates as $J$ increases. Note that for the BE CP, there exists a critical threshold $J_{\star} > 0$, such that the parareal solver converges with a convergence rate near $0.3$ whenever the ratio $J$ satisfies $J \geq J_{\star}$, including nonsmooth problem data \cite{yang2021robust}. The threshold $J_*$ is estimated to be $J_{\star} = 2$. Thus the error behaviors of the BE CP for $J=20$ and $J=100$ are nearly identical $J>J_{\star} = 2$ and the choice $J$ has little influence on the convergence, confirming the theoretical prediction.  In Fig. \ref{fig:ex2_2}, for $\epsilon^{2}=0.02$, we illustrate OCPs with $J=10,$ $20$ and $\Delta t = 1/10000$ (i.e., $\Delta T=1/1000$ and $1/500$).
For $J=10$, $20$, the convergence rate of OCPs is about $0.04$. However, for larger $J$, the difference between the BE scheme and OCPs is small, calling for modifications of OCPs for nonlinear problems, especially for small $\epsilon$. In Table \ref{tab:semi linear: para eff}, we show the parareal efficiency for $\epsilon^{2}=1$, $T=10$, $J=20$ and
a target accuracy {$\eta={1.0\cdot10^{-6}}$}. 
{In the table, to compensate for nonlinearity, we have experimented with smaller coarse step sizes $\delta T=\Delta T/5$ for both OCPs and BE. Since the cost associated with solving nonlinear systems in fine steps is significant, the cost of explicit CPs is negligible. This is evidenced by the small difference in speed-up and efficiency between scenarios with / without cost $\G$, cf. Table \ref{tab:semi linear: para eff}.} The trend of speed-up and efficiency is consistent with Table \ref{tab:linear: para eff}. However, the strong nonlinearity renders the efficiency improvement less pronounced.

\begin{figure} [tbhp]
\centering
\includegraphics[width=0.95\textwidth,trim={3cm 2cm 3cm 2cm},clip]{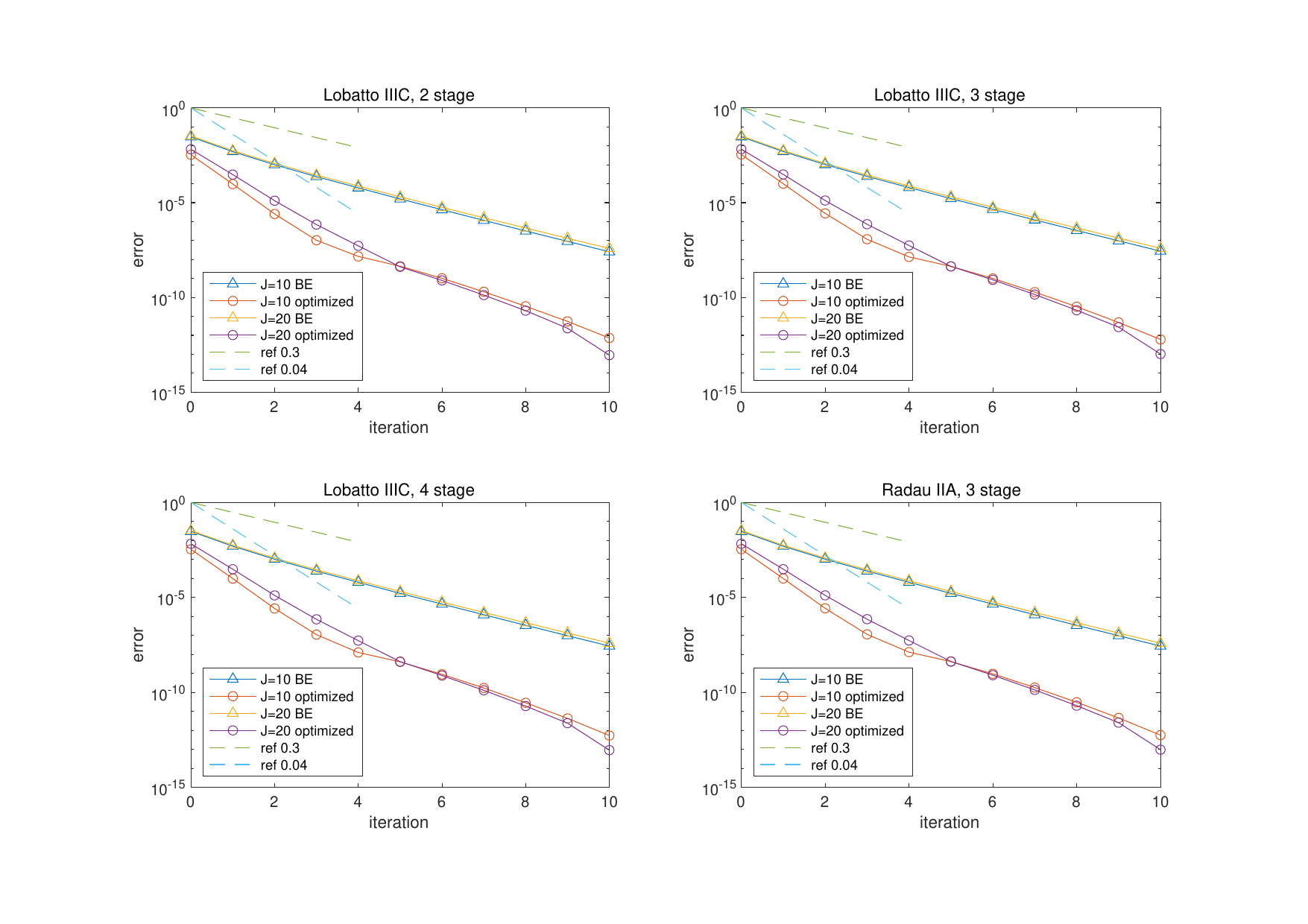}
\caption{The convergence rate for Example \ref{exam:AC} (Allen--Cahn equation) with $\epsilon^{2}=0.02$.\label{fig:ex2_2}}
\end{figure}
	
\begin{table}
\centering
\begin{threeparttable}
\caption{The parareal efficiency for Example \ref{exam:AC} with $\epsilon^{2}=1$, $T=10$, $J=20$ and {$\eta ={1.0\cdot10^{-6}}$}.}
		\begin{tabular}{c|cccccc}
		\toprule
		& \multicolumn{2}{c}{three-stage Lobatto} & \multicolumn{2}{c}{three-stage Radau } & \multicolumn{2}{c}{four-stage Lobatto} \\ \hline
		 $\Delta t$ & \multicolumn{2}{c}{1/300} & \multicolumn{2}{c}{1/100} & \multicolumn{2}{c}{1/80} \\ \midrule
		\textbf{Speed-up}  & BE \red{(6)} & OCP \red{(3)} & BE \red{(11)} & OCP \red{(5)} & BE \red{(11)} & OCP \red{(3)} \\ \midrule
	w cost $\G$ &10.93 & 18.78 & 3.65 & 7.90 & 3.43 & 11.63 \\
	w/o cost $\G$  & 11.62 & 19.70 & 3.74 & 8.40 & 3.48 & 11.90 \\ \midrule
		\textbf{Efficiency}  & BE & OCP & BE & OCP & BE & OCP \\ \midrule
	w cost $\G$  & 7.29\% & 12.52\% &7.27\% & 15.80\% & 8.59\% & 29.08\% \\
w/o cost $\G$  &7.75\% & 13.13\% & 7.48\% & 16.80\% & 8.69\% & 29.75\% \\
 \bottomrule
	\end{tabular}
	\label{tab:semi linear: para eff}
\end{threeparttable}
\end{table}
	
The third and last example is about viscous Burgers' equation, which is a model problem in fluid dynamics,
describing e.g., the decay of turbulence within a box. 
\begin{example}\label{exam:Burgers}
Fix $T=1$, and $u_{0}(x)=\chi_{(0,1/2)}(x).  $  Consider  the viscous Burgers' equation
\begin{equation*}
	\left\{	\begin{aligned}
		\partial_{t} u + u \partial_{x} u &= \nu \partial_{xx} u, & &x \in (0,1), \, t \in (0,T], \\
		u(0,t) &= u(1,t) = 0, & &t \in (0,T], \\
		u(x,0) &= u_{0}(x), & &x \in (0,1).
	\end{aligned}\right.
\end{equation*}
\end{example}

We take $\nu \in \{1, 0.02 \}$ in the experiment. In Fig. \ref{fig:ex3_a}, for $\nu =1$, we examine OCPs with $J=20$, $100$ and $\Delta t = 1/2000$ (i.e., $\Delta T=1/100$ and $1/20$). The results show that the convergence rate of the parareal algorithm, with OCPs, is approximately $0.04$, which agree well with that in Fig. \ref{fig:ex2_1}.
In {Fig. \ref{fig:ex3_b}}, for the more challenging case $\nu =0.02$, we evaluate OCPs with $J=10, 100$ and $\Delta t = 1/10000$ (i.e., $\Delta T=1/1000$ and $1/100$). For $J=10$, the convergence rate of OCPs is approximately 0.06, whereas that for BE is around 0.21. Remarkably, the ratio $\Delta T / \nu $ is unchanged for $J=100$ in the linear case, cf. {Fig. \ref{fig:ex1 smooth}}, so their behaviors are similar. This phenomenon has been analyzed for linear advection-diffusion cases in \cite{gander2020reynolds}. For $J=100$,  both the parareal algorithm and the standard one exhibit slower convergence. Nonetheless, OCP is still competitive with the BE CP.

\begin{figure} [hbt!]
\centering
\includegraphics[width=0.95\textwidth,trim={3cm 2cm 3cm 2cm},clip]{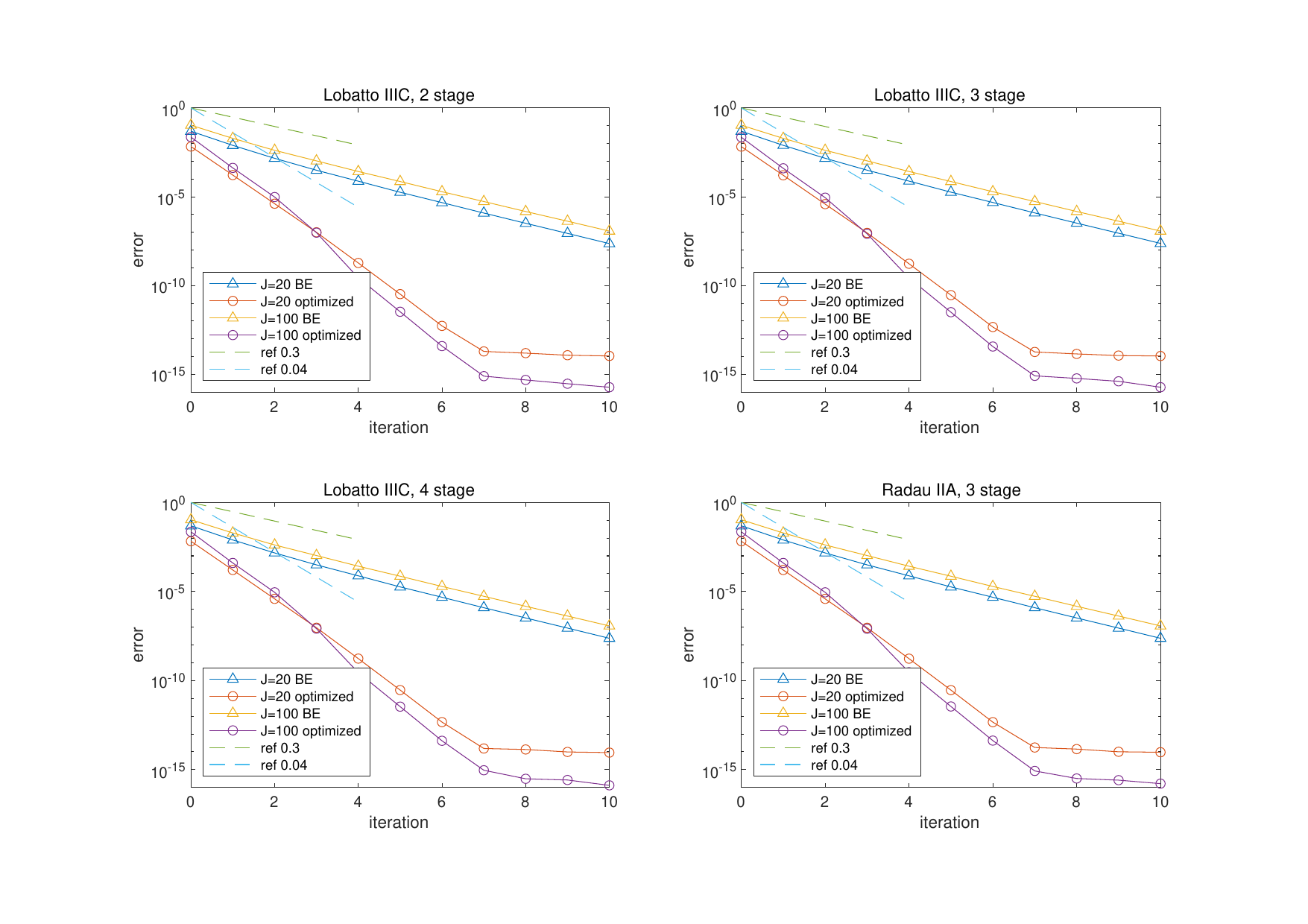}
\caption{The convergence rate for Example \ref{exam:Burgers} (viscous Burgers' equation) with $\nu = 1$.}
\label{fig:ex3_a}
\end{figure}
	
\begin{figure} [hbt!]
\centering
\includegraphics[width=0.95\textwidth,trim={3cm 2cm 3cm 2cm},clip]{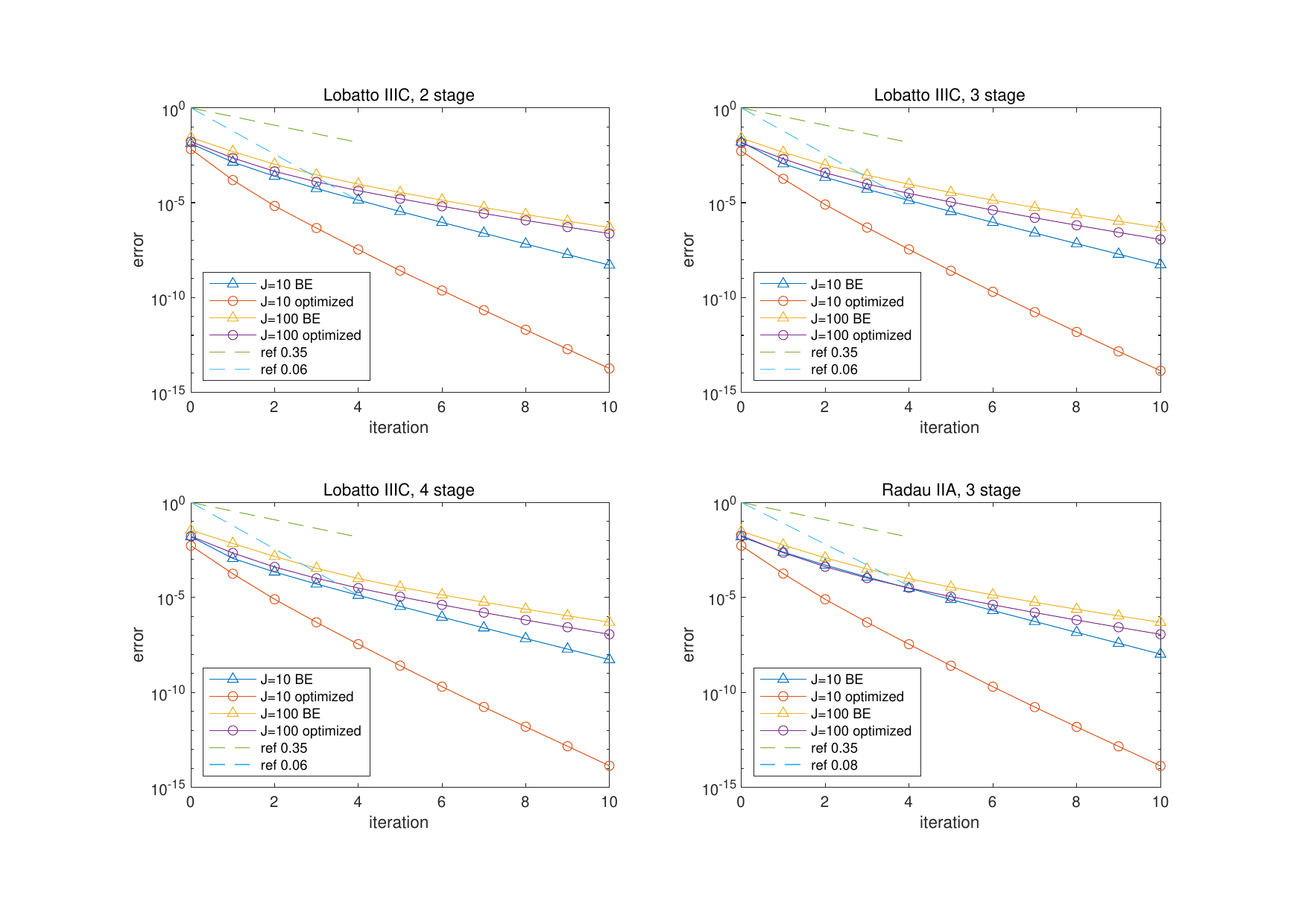}
\caption{The convergence rate for Example \ref{exam:Burgers} (viscous Burgers' equation) with $\nu = 0.02$.}
\label{fig:ex3_b}
\end{figure}

\section{Conclusion}
We have explored the potential of optimizing coarse propagators in parareal algorithms to overcome the barrier in convergence order. The approach is based on rigorous error estimation for linear evolution problems, which ensures that the OCPs enjoy highly desirable properties, e.g., stability and consistent order. We have developed a systematic procedure for optimizing the CP. The numerical experiments fully confirm the theoretical prediction, including mildly nonlinear problems. Most remarkably, even for highly nonlinear problems, the OCPs can still outperform the classical BE scheme.

One important question is to extend the idea to far more challenging situations, e.g., highly nonlinear problems, multiscale problems \cite{LiHu:2021}, {and hyperbolic problems \cite{DeSterck:2021,DeSterck:2024}. The key of the extensions lies in the error estimation, which remains largely open for, e.g., general hyperbolic problems.} Also it is promising to develop the optimizing strategy for other parallel-in-time algorithms, e.g., the two-level MGRIT algorithm with FCF-smoother  \cite{Southworth:2019,FriedhoffSouthworth:2021}. This may necessitate revisiting the formulation of the optimization problem, e.g., inclusion of the constraints \eqref{eq:accurate}.

\section*{Acknowledgements}
The authors are grateful to two anonymous referees for their constructive comments which have led to improved presentation of the paper.
 
\appendix
\section{Additional plots} Figs. \ref{fig:app_2_b} and \ref{fig:app_2_c} show additional plots of the function $\phi(s)$ for different choices of the ratio $J$, complementing Fig. \ref{fig:app_2_a}.

\begin{figure} [hbt!]
\centering
\includegraphics[width=1\textwidth,trim={3cm 2cm 3cm 2cm},clip]{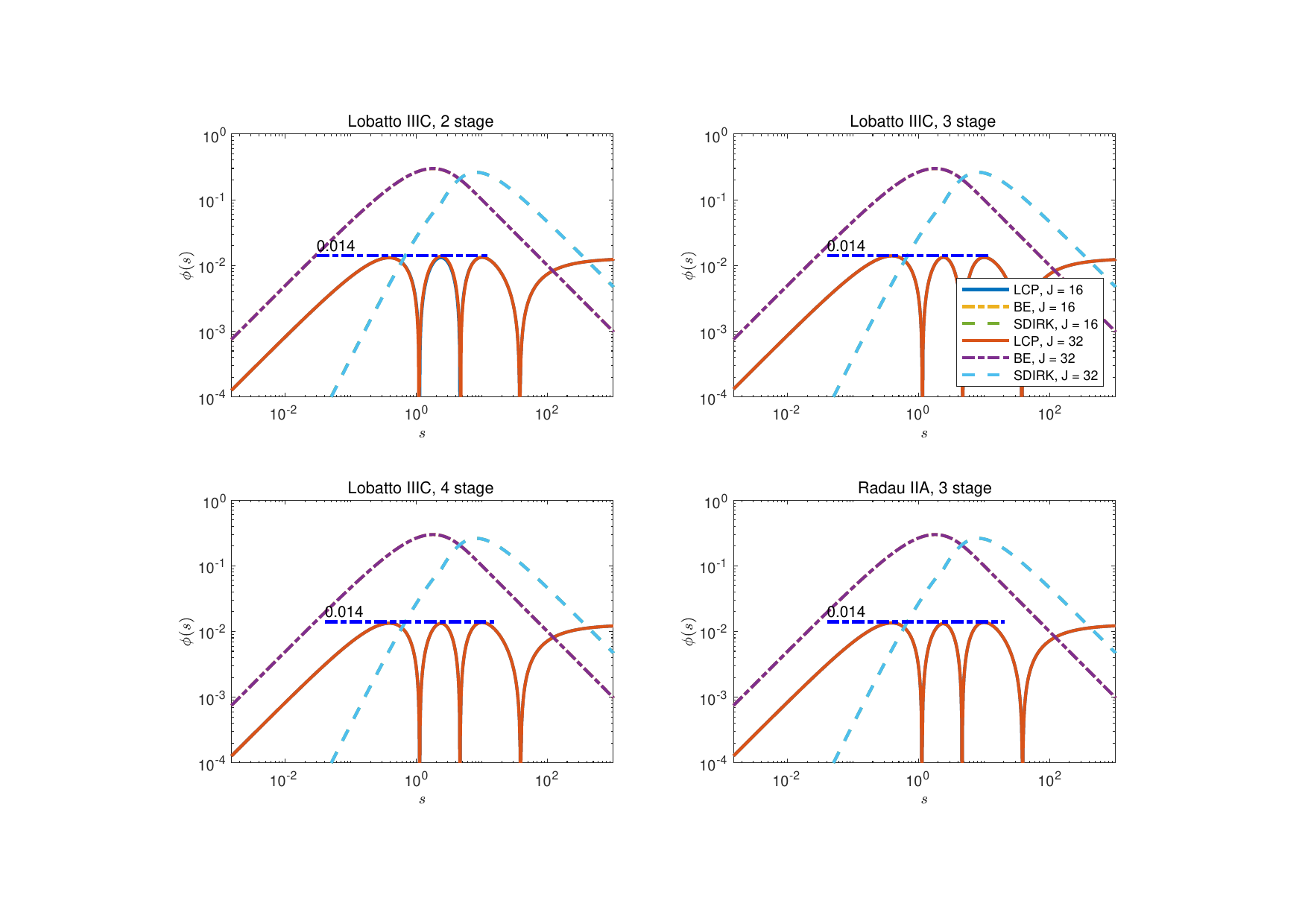}
\caption{ The function $\phi (s)$ for three CPs and four FPs when $J\in\{16,32\}$.}\label{fig:app_2_b}
\end{figure}

\begin{figure} [hbt!]
\centering
\includegraphics[width=1\textwidth,trim={3cm 2cm 3cm 2cm},clip]{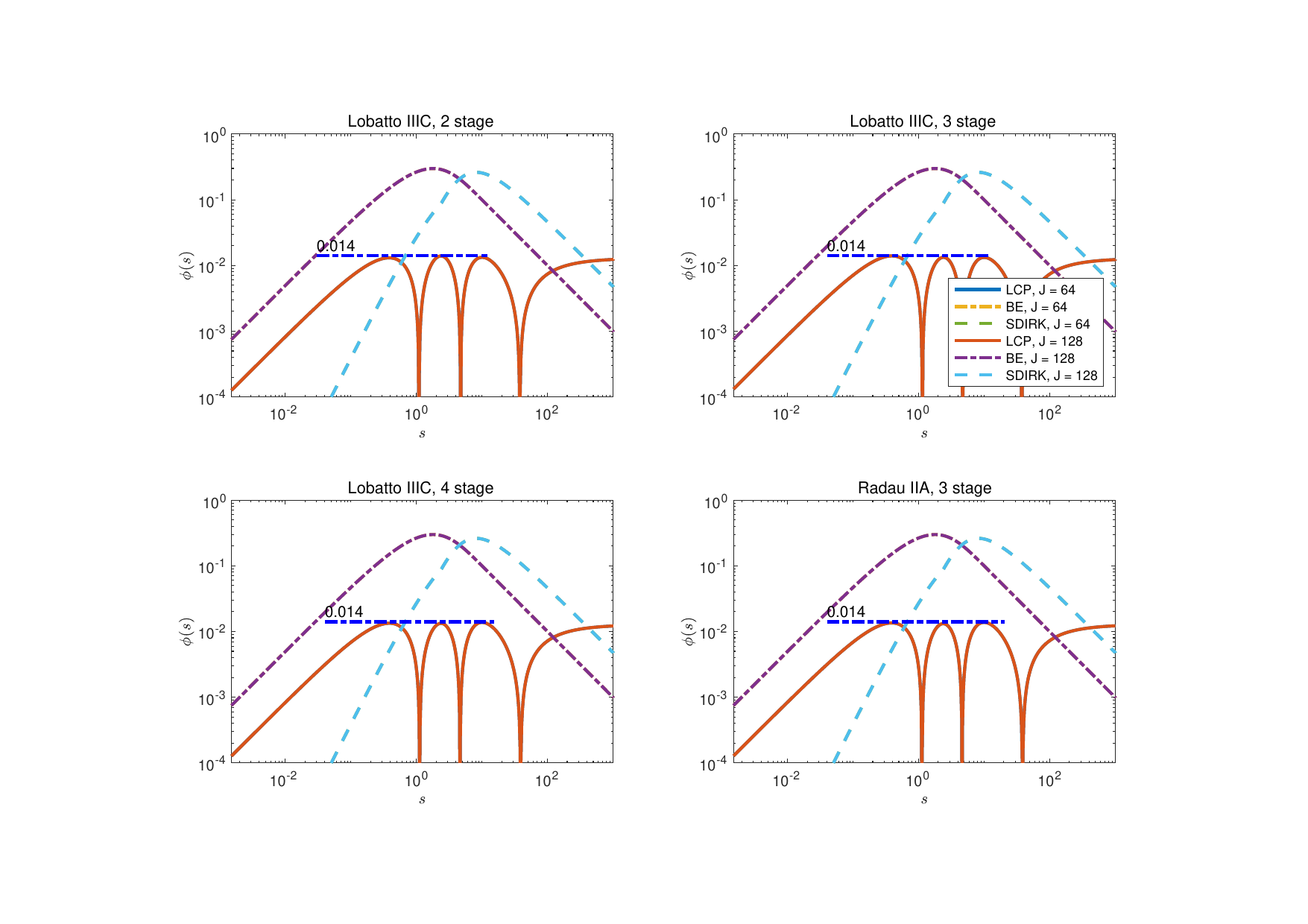}
\caption{ The function $\phi(s)$ for three CPs and four FPs when $J\in\{64,128\}$.}\label{fig:app_2_c}
\end{figure}

\section{Explicit formulas of OCPs}\label{app:coarse}
In this appendix, we present the explicit formulas of OCPs, for $q=1$, $C_{1}^{0}=1$ and $J_0=16$.
\vskip5pt
\noindent (1) For two-stage Lobatto IIIC method,
		\begin{align*}
			R_2(\lambda) &= \frac{{1.0 - 0.20967\lambda + 0.00484\lambda^2}}{{1.0 + 0.79033\lambda + 0.37931\lambda^2}}\quad\mbox{and}\quad 			P_1(\lambda) = \frac{1.0 + 0.37447\lambda}{1.0 + 0.79033\lambda + 0.37931\lambda^2}.
		\end{align*}
		
\noindent (2) For three-stage Lobatto IIIC method,
		\begin{align*}
			R_3(\lambda) &= \frac{{1.0 - 0.21014\lambda + 0.00486\lambda^2}}{{1.0 + 0.78986\lambda + 0.38283\lambda^2}}\quad\mbox{and}\quad
			P_1(\lambda) = \frac{1.0 + 0.37797\lambda}{1.0 + 0.78986\lambda + 0.38283\lambda^2}.
		\end{align*}
		
\noindent (3) For four-stage Lobatto IIIC method,
		\begin{align*}
	R_4(\lambda) &= \frac{{1.0 - 0.21091\lambda + 0.00476\lambda^2}}{{1.0 + 0.78909\lambda + 0.37898\lambda^2}}\quad\mbox{and}\quad
			P_1(\lambda) = \frac{1.0 + 0.37422\lambda}{1.0 + 0.78909\lambda + 0.37898\lambda^2}.
		\end{align*}

\noindent (4) For three-stage Radau IIA method,
		\begin{align*}
			R_r(\lambda) &= \frac{{1.0 - 0.21125\lambda + 0.00479\lambda^2}}{{1.0 + 0.78875\lambda + 0.37922\lambda^2}}\quad\mbox{and}\quad
			P_1(\lambda) = \frac{1.0 + 0.37443\lambda}{1.0 + 0.78875\lambda + 0.37922\lambda^2}.
		\end{align*}
\noindent (5) For $\theta$ scheme ($\t = 0.52$),
		\begin{align*}
		R_{\bt}(\lambda) &= \frac{1.0 + 84.79323\lambda - 15.89302\lambda^{2} + 0.13247\lambda^{3}}{1.0 + 85.79323\lambda + 83.00272\lambda^{2} + 0.66421\lambda^{3}},\\
\text{and}\quad P_{\t}(\lambda) &= \frac{1.0 + 42.18022\lambda + 0.20341\lambda^{2}}{1.0 + 36.15808\lambda + 35.70839\lambda^{2} + 0.25501\lambda^{3}}.
		\end{align*}
In Fig. \ref{fig:app_2}, we show the stability functions of OCPs and the corresponding fine propagators (for the first four cases). It is observed that they are all stable approximations of $e^{-\lambda}$ around $\lambda=0$.
	
\begin{figure} [tbhp!]
\centering
\includegraphics[width=1\textwidth,trim={3cm 2cm 3cm 2cm},clip]{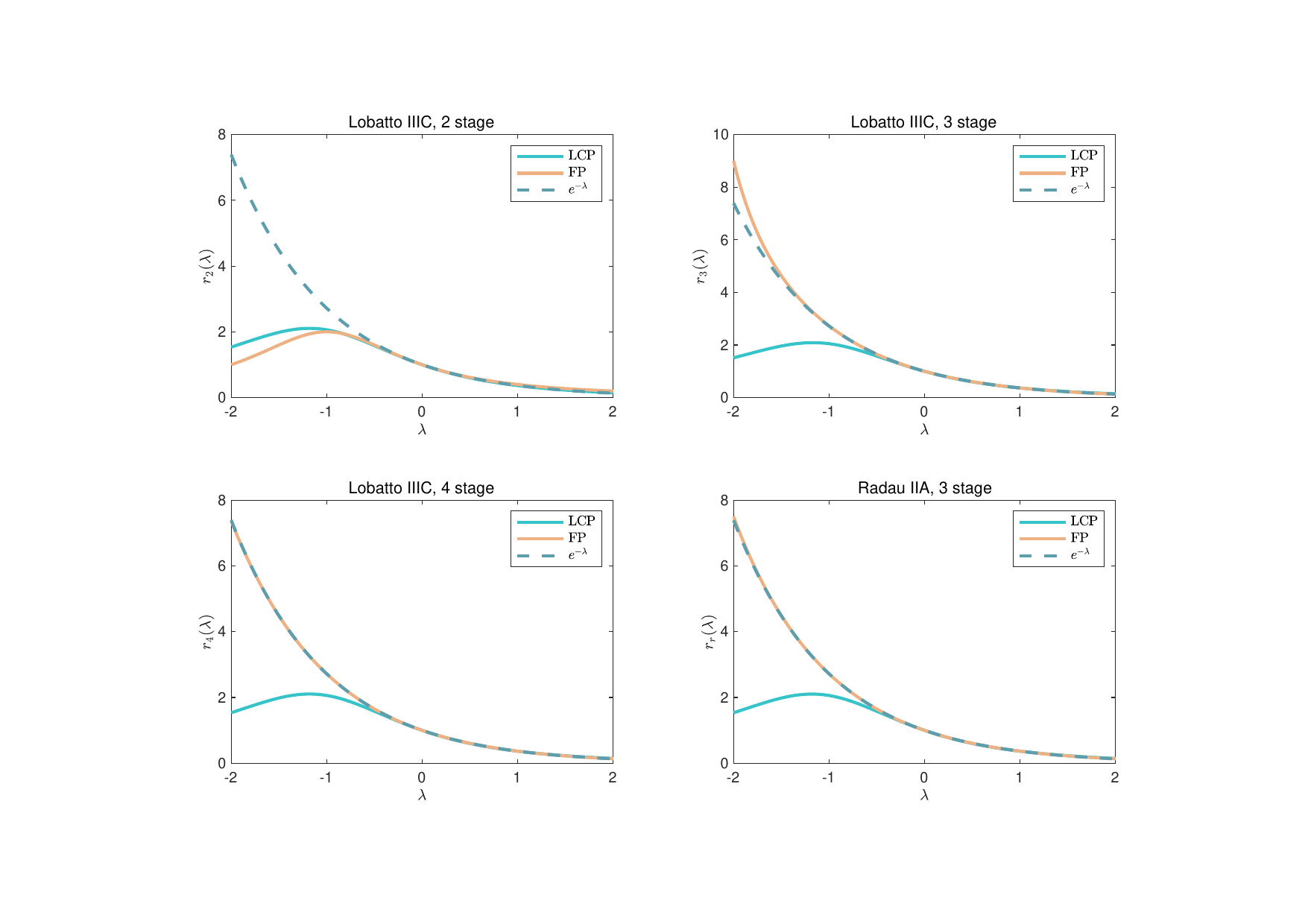}
\caption{The stability functions of the OCPs, for four FPs.}\label{fig:app_2}
\end{figure}

\bibliographystyle{siam}

\end{document}